\documentclass[12pt,a4paper,oneside]{amsart}

\usepackage{amsfonts, amsmath, amssymb, amsthm, amscd}
\usepackage{anysize}
\usepackage{graphicx}
\usepackage{pdfsync}
\usepackage{enumitem}

%\usepackage{lineno}
%\linenumbers

\usepackage{hyperref}
\hypersetup{
   colorlinks = true, %Colours links instead of ugly boxes
   urlcolor = blue, %Colour for external hyperlinks
   linkcolor = blue, %Colour of internal links
   citecolor = red %Colour of citations
}

\newtheorem{theorem}{Theorem}[section]
\newtheorem{lemma}[theorem]{Lemma}

\newtheorem{conjecture}[theorem]{Conjecture}

\newtheorem{proposition}[theorem]{Proposition}

\theoremstyle{definition}
\newtheorem{definition}[theorem]{Definition}

\theoremstyle{remark}
\newtheorem{remark}[theorem]{Remark}
\newtheorem{question}[theorem]{Question}
\newtheorem{example}[theorem]{Example}

\numberwithin{equation}{section}

\DeclareMathOperator{\dist}{dist}

\DeclareMathOperator{\rint}{rint}
\DeclareMathOperator{\inte}{int}
\DeclareMathOperator{\conv}{conv}

\newcommand{\eps}{\varepsilon}

%\relpenalty=100
%\binoppenalty=10000

\begin{document}

\title{A multi-plank generalization of the Bang and Kadets inequalities}

%\author[A.~Balitskiy]{Alexey~Balitskiy}
%
%\email{balitski@mit.edu}
%\thanks{The author is supported by the Russian Foundation for Basic Research Grant 18-01-00036.}

%\address{Dept. of Mathematics, Massachusetts Institute of Technology, 182 Memorial Dr., Cambridge, MA 02142}
%\address{Dept. of Mathematics, Moscow Institute of Physics and Technology, Institutskiy per. 9, Dolgoprudny, Russia 141700}
%\address{Institute for Information Transmission Problems RAS, Bolshoy Karetny per. 19, Moscow, Russia 127994}

\author[A.~Balitskiy]{Alexey~Balitskiy}

\email{balitski@mit.edu}

%\author[R.~Karasev]{Roman~Karasev{$^{\clubsuit}$}}
%
%\email{r\_n\_karasev@mail.ru}
%\urladdr{http://www.rkarasev.ru/en/}

\address{Dept. of Mathematics, Massachusetts Institute of Technology, 182 Memorial Dr., Cambridge, MA 02142}
\address{Institute for Information Transmission Problems RAS, Bolshoy Karetny per. 19, Moscow, Russia 127994}

\thanks{Supported in part by the Russian Foundation for Basic Research Grant 18-01-00036}

%\subjclass[2010]{52A20, 52A23, 53D35}
%\keywords{Billiards, Minkowski norm, Viterbo's conjecture, Permutohedron.}

\begin{abstract}
If a convex body $K \subset \mathbb{R}^n$ is covered by the union of convex bodies $C_1, \ldots, C_N$, multiple subadditivity questions can be asked. Two classical results regard the subadditivity of the width (the smallest distance between two parallel hyperplanes that sandwich $K$) and the inradius (the largest radius of a ball contained in $K$): the sum of the widths of the $C_i$ is at least the width of $K$ (this is the plank theorem of Th\o{}ger~Bang), and the sum of the inradii of the $C_i$ is at least the inradius of $K$ (this is due to Vladimir~Kadets).

We adapt the existing proofs of these results to prove a theorem on coverings by certain generalized non-convex ``multi-planks''. One corollary of this approach is a family of inequalities interpolating between Bang's theorem and Kadets's theorem. Other corollaries include results reminiscent of the Davenport--Alexander problem, such as the following: if an $m$-slice pizza cutter (that is, the union of $m$ equiangular rays in the plane with the same endpoint) in applied $N$ times to the unit disk, then there will be a piece of the partition of inradius at least $\frac{\sin \pi/m}{N + \sin \pi/m}$.
\end{abstract}

\maketitle

\section{Introduction}
\label{sec:intro}

Let $K$ be a convex set in $\mathbb{R}^n$ endowed with the Euclidean norm. Two basic quantities measuring the ``thickness'' of $K$ are its width $w(K)$, the smallest distance between two parallel hyperplanes that sandwich $K$, and its inradius $r(K)$, the largest radius of a ball contained in $K$. There are two classical results on the subadditivity of $w(\cdot)$ and $r(\cdot)$.

\begin{theorem}[Th.~Bang~{\cite{bang1951solution}}]
\label{thm:bang}
If a convex set $K$ is covered by convex sets $C_1, \ldots, C_N$, then
\[
\sum\limits_{i=1}^N w(C_i) \ge w(K).
\]
\end{theorem}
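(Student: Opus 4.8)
The plan is to reduce the statement to the case of \emph{planks} (slabs bounded by two parallel hyperplanes) and then run Bang's extremal argument. First I would note it suffices to treat planks: if $C_i$ has width $w(C_i)$, realized between two parallel supporting hyperplanes with common unit normal $u_i$, then $C_i$ lies in the plank $P_i = \{x : |\langle u_i,x\rangle - a_i| \le w(C_i)/2\}$ of the same width, so $K \subseteq \bigcup_i C_i \subseteq \bigcup_i P_i$ and $\sum_i w(P_i) = \sum_i w(C_i)$. So assume $K$ is compact (shrink it slightly if needed) and is covered by planks $P_i = \{x : |\langle u_i,x\rangle - a_i| \le w_i/2\}$ with $|u_i|_2 = 1$, and suppose toward a contradiction that $W := \sum_{i=1}^N w_i < w(K)$; the goal is to exhibit a point of $K$ lying in no $P_i$.

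The combinatorial core is Bang's lemma. Among the $2^N$ vectors $z(\epsilon) = \sum_{i=1}^N \epsilon_i \frac{w_i}{2} u_i$, $\epsilon \in \{-1,+1\}^N$, choose one, $z = z(\epsilon^*)$, of maximal Euclidean norm. Flipping the $i$-th coordinate of $\epsilon^*$ cannot increase the norm, so $\|z - \epsilon_i^* w_i u_i\|_2^2 \le \|z\|_2^2$, which rearranges to $\epsilon_i^* \langle u_i, z\rangle \ge w_i/2$ for every $i$; moreover $\|z\|_2 \le \sum_i \frac{w_i}{2} = W/2 < w(K)/2$. Thus $z$ (after possibly replacing each $u_i$ by $\epsilon_i^* u_i$ and $a_i$ by $\epsilon_i^* a_i$, so that all signs become $+1$) satisfies $\langle u_i, z\rangle \ge w_i/2$ for all $i$, while being short relative to $w(K)$.

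It remains to upgrade this into an honest point of $K$ outside $\bigcup_i P_i$, and this is the delicate step — the real content of the plank theorem. The obstruction is that a naive translate $x_0 + 2z$ of a point $x_0 \in K$ need not lie in $K$, since a convex body of width $w(K)$ need not contain a translate of a zonotope of diameter less than $w(K)$. Following Bang, I would instead build the escaping point by a finite inductive descent: maintain a point of $K$ together with a record of which planks it has already escaped, and at each step adjust it by a vector of the form $\pm \frac{w_j}{2} u_j$ chosen by the same extremal (norm-maximizing) principle, using the slack $W < w(K)$ to guarantee the adjusted point can be kept inside $K$ while never re-entering a previously escaped plank. Terminating this process yields $p \in K$ with $|\langle u_i, p\rangle - a_i| > w_i/2$ for all $i$, contradicting $K \subseteq \bigcup_i P_i$. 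I expect this marriage of the combinatorial escape estimate to the geometry of $K$ — keeping the constructed point inside $K$ throughout — to be the main obstacle; the reduction to planks and the extremal lemma itself are routine.
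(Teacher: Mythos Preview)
Your reduction to planks and your extremal computation (choosing $z(\epsilon^*)$ of maximal norm and deducing $\epsilon_i^*\langle u_i,z\rangle\ge w_i/2$) are correct, and this is indeed Bang's lemma in the special case where all planks are centered ($a_i=0$). The gap is in the paragraph that follows.

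You misdiagnose the remaining step. Your assertion that ``a convex body of width $w(K)$ need not contain a translate of a zonotope of diameter less than $w(K)$'' is true for an arbitrary zonotope but \emph{false for the Bang zonotope}. Each segment $[-\tfrac{w_i}{2}u_i,\tfrac{w_i}{2}u_i]$ has length $w_i\le W<w(K)$ and hence fits in a translate of $\tfrac{w_i}{w(K)}K$ (any convex body of width $w$ contains a chord of length $w$ in every direction). Minkowski-summing and using $\lambda K+\mu K=(\lambda+\mu)K$ for convex $K$ and $\lambda,\mu\ge 0$ gives
\[
\{z(\epsilon):\epsilon\in\{\pm1\}^N\}\ \subset\ s+\tfrac{W}{w(K)}K\ \subset\ s+K
\]
for a suitable translate. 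This is exactly the paper's Step~2.1. Once the whole Bang set sits inside $K$, one runs the extremal argument on the translated set: either maximize $\|z(\epsilon)\|^2-2\sum_i\epsilon_i(a_i+\langle u_i,s\rangle)$ to handle the offsets directly, or first reduce to $a_i=0$ by Bogn\'ar's coning trick (the paper's Step~1) and then take the farthest point from the origin in $\{z(\epsilon)\}-s$. Either route finishes the proof in one stroke.

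Your proposed ``inductive descent'' is neither Bang's original argument nor a viable substitute: moving by $\pm\tfrac{w_j}{2}u_j$ gives no control on staying inside $K$ (the total displacement can be as large as $W$ in a bad direction), and the norm-maximizing rule for escaping new planks will in general force moves that re-enter previously escaped ones. You flag this as ``the main obstacle'' but do not resolve it; the resolution is precisely the Minkowski-sum placement you dismissed.
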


\begin{theorem}[V.~Kadets~{\cite{kadets2005coverings}}]
\label{thm:kadets}
If a convex set $K$ is covered by convex sets $C_1, \ldots, C_N$, then
\[
\sum\limits_{i=1}^N r(C_i) \ge r(K).
\]
\end{theorem}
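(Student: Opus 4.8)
The plan is to mimic Bang's proof of Theorem~\ref{thm:bang}, with arbitrary convex sets playing the role of planks and the inradius playing the role of the width. First I would reduce to the case where $K$ is a Euclidean ball: if $B = B(z, r(K))$ is an inscribed ball of $K$ of maximal radius, then $\{C_i \cap B\}_{i=1}^{N}$ still covers $B$ and $r(C_i \cap B) \le r(C_i)$, so it is enough to prove $\sum_i r(C_i \cap B) \ge r(B) = r(K)$. After a translation and a rescaling assume $B = B(0,1)$, put $\rho_i := r(C_i \cap B)$, and argue by contradiction: supposing $\sum_i \rho_i < 1$, I want to produce a point of $B$ lying in no $C_i$.

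The elementary fact that replaces ``a plank of width $w$ is a slab of width $w$'' is this: if $x \in \inte C$ then $\dist(x, \partial C) \le r(C)$, since the ball of radius $\dist(x,\partial C)$ about $x$ lies in $C$. Hence from any point of $C_i$ one can leave $C_i$ by a displacement of length at most $\rho_i$, moving toward (and slightly past) the nearest boundary point, in the direction of an outward normal there. This suggests an ``escape walk'': start at $0 \in B$, which lies in some $C_{i_1}$, and step just outside $C_{i_1}$; the new point is within distance $\rho_{i_1} < 1$ of $0$, hence still in $B$, so it lies in some other set $C_{i_2}$; step just outside $C_{i_2}$; and so on. If each set is used at most once, the total displacement is at most $\sum_i \rho_i < 1 = r(B)$, so the walk never leaves $B$, and after at most $N$ steps it reaches a point of $B$ that lies in none of the $C_i$ — the desired contradiction.

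The obstacle, and the place where Bang's real idea is needed, is to make the escape \emph{coherent}: a greedy step out of the current set may plunge back into a set abandoned earlier, so the walk can cycle without ever exhausting all $N$ sets. In the plank case this is resolved once and for all by the sign-choosing lemma: choosing $\epsilon \in \{\pm 1\}^N$ to maximize $\bigl\|\sum_j \epsilon_j \tfrac{w_j}{2} u_j\bigr\|^2$ forces $\epsilon_i \bigl\langle u_i, \sum_j \epsilon_j \tfrac{w_j}{2} u_j\bigr\rangle \ge \tfrac{w_i}{2}$ for every $i$ simultaneously, so the single point $\sum_j \epsilon_j \tfrac{w_j}{2} u_j$ escapes all planks at once. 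I would therefore set up the analogous extremal problem: over the compact set of admissible escape data — a unit outward-normal direction $u_i$ for each $C_i$ together with a step length $t_i \in [0,\rho_i]$ — maximize a suitable quadratic functional of $\sum_i t_i u_i$, and use a first-variation argument at the maximizer to conclude that the resulting point lies in $B$ and outside every $C_i$. Pinning down the correct functional and the correct space of admissible escape data, and in particular handling starting points that lie on the common boundary of several $C_i$ and verifying that the extremal point both escapes all the sets and stays in $B$, is the technical heart of the proof. The decomposition of $B(0,1)$ into two half-balls, each of inradius $\tfrac12$, shows the inequality is sharp, so any correct choice of functional must reproduce this equality case.
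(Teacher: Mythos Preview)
Your instinct that a Bang-type extremal argument should work is correct, but the step you yourself flag as ``the technical heart'' is not a detail to be filled in later; it is the entire proof, and as stated your search space is not tied to the geometry of the $C_i$ in any way that lets a first-variation argument conclude something about them. If $u_i$ ranges over all outward unit normals of $C_i$ and $t_i$ over $[0,\rho_i]$, then the maximizer of $\bigl|\sum_i t_i u_i\bigr|^2$ just aligns the $u_i$ and carries no information about $\partial C_i$; and there is no mechanism in your sketch to handle the fact that the inscribed balls of the different $C_i$ have different centers.

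What the paper does (specialized to $k=n$) fixes both issues. For each $C_i$ take its maximal inscribed ball $c_i+B_{r_i}$; the contact points with $\partial C_i$ yield a \emph{finite} set $V_i=\{v_i^1,\ldots,v_i^{m_i}\}$ with $|v_i^j|=r_i$, $0\in\rint\conv V_i$, and $C_i\subset\bigcap_j\{x:\langle x-c_i,v_i^j\rangle<r_i^2\}$ (Lemma~\ref{lem:simple}). The Bogn\'ar trick---embed everything in a hyperplane of $\mathbb{R}^{n+1}$ and replace $K$ by the cone over $K$ from a far apex---reduces, up to an error vanishing in the limit, to the case where all $c_i=0$. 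Now the admissible data is a discrete choice $j_i\in[m_i]$ for each $i$, the step length is \emph{fixed} at $r_i$, and the functional is $|\cdot|^2$: form the Bang set $X=V_1+\cdots+V_N$, observe that if $\sum_i r_i<r(K)$ then $X$ fits in some translate $s+K$, and pick $x\in X-s$ of maximal norm. For each $i$, swapping the $i$th summand to any other element of $V_i$ cannot increase $|x|$, i.e.\ $|x|\ge|x-v_i^{j_i}+v_i^{j'}|$ for all $j'$; choosing $j'$ with $\langle x-v_i^{j_i},v_i^{j'}\rangle\ge 0$ (possible since $0\in\conv V_i$) gives $\langle x,v_i^{j_i}\rangle\ge r_i^2$, so $x\notin C_i$. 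Your reduction to $K$ a ball is harmless but does not touch the real difficulty; the two missing ingredients are the contact-point sets $V_i$ as the escape data and the centering step.
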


If a convex set $K$ sits inside an affine subspace $L$ of $\mathbb{R}^n$, we use the notation $r(K;L)$ for the inradius of $K$ measured inside $L$.

\begin{definition}
\label{def:inrad}
Let $1 \le k \le n$. The following quantities will be called the \emph{intrinsic inradii} of a convex set $K \subset \mathbb{R}^n$.
\begin{enumerate}
  \item The \emph{upper intrinsic inradius} of $K$ is defined as
\[
r^{(k)}(K) = \inf\limits_{\dim L = k} r(K \vert L; L) = \inf\limits_{\dim L = k} r(K + L^\bot),
\]
where $L$ runs over the $k$-dimensional subspaces of $\mathbb{R}^n$, and $K \vert L$ is the orthogonal projection of $K$ onto $L$.

  \item The \emph{lower intrinsic inradius} of $K$ is defined as
\[
r_{(k)}(K) = \inf\limits_{\dim L = k} \sup\limits_{x \in L^\bot} r(K \cap (L+x); L+x),
\]
Equivalently, $r_{(k)}(K)$ can be defined via a Kakeya-type property: it is the supremum of numbers $r$ such that the open ball of radius $r$ of any $k$-dimensional subspace can be placed in $K$ after a translation.
\end{enumerate}
\end{definition}

Those radii appeared (under different names) in multiple papers, e.g.~\cite{betke1992estimating, betke1993generalization, brandenberg2005radii, henk2008intrinsic}. Some other notions of successive radii (different from ours) in the context of certain plank problems were considered in~\cite{bezdek1995solution, bezdek1996conway, bezdek2014plank}.

Observe that $r^{(1)}(K) = r_{(1)}(K) = w(K)/2$ and $r^{(n)}(K) = r_{(n)}(K) = r(K)$.
It is clear that $r^{(k)}(K) \ge r_{(k)}(K)$, but in general it might happen that this inequality is strict; for instance, this happens for the regular tetrahedron in $\mathbb{R}^3$ and $k=2$.

%\begin{lemma}
%\label{lem:coin}
%The intrinsic radius $r^{(k)}(K)$ can be equivalently defined as the largest radius $r$ such that the radius $r$ ball of any $k$-dimensional subspace can be placed in $K$ after a translation.
%\end{lemma}

The following result, interpolating between Theorem~\ref{thm:bang} and Theorem~\ref{thm:kadets}, will follow as a corollary of the main theorem, Theorem~\ref{thm:pants}.
\begin{theorem}
\label{thm:subadditivity}
If a convex set $K$ is covered by convex sets $C_1, \ldots, C_N$, then for any $1 \le k \le n$,
\[
\sum\limits_{i=1}^N r^{(k)}(C_i) \ge r_{(k)}(K).
\]
\end{theorem}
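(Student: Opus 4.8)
The plan is to reduce the statement to a covering of $K$ by \emph{convex cylinders} and then run a Bang-type extremal argument on these cylinders. The endpoints are instructive but do not suffice: the case $k=n$ is literally Theorem~\ref{thm:kadets} and the case $k=1$ is Theorem~\ref{thm:bang} (since $r^{(1)}=r_{(1)}=w/2$), while the middle range $1<k<n$ is where the real work sits.

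\textbf{Step 1: reduction to cylinders.} After a routine approximation reducing to compact sets, pick for each $i$, by compactness of the Grassmannian and continuity of $L\mapsto r(C_i+L^\bot)$, a $k$-dimensional subspace $L_i$ realizing the infimum in Definition~\ref{def:inrad}, so $r^{(k)}(C_i)=r(C_i\vert L_i;L_i)=:\rho_i$. Replace each $C_i$ by the cylinder $D_i:=C_i+L_i^\bot=(C_i\vert L_i)+L_i^\bot\supseteq C_i$; then $K\subseteq\bigcup_i D_i$, each $D_i$ is the sum of $L_i^\bot$ with a $k$-dimensional convex body $A_i:=C_i\vert L_i\subset L_i$, and $r(D_i)=r(A_i;L_i)=\rho_i$. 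It suffices to prove: \emph{if a convex body $K$ is covered by convex cylinders $D_i=A_i+L_i^\bot$ with $\dim L_i=k$, then $\sum_i r(A_i;L_i)\ge r_{(k)}(K)$.}

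\textbf{Step 2: parallel axes, and the role of $r_{(k)}$.} If all the $L_i$ coincide with one subspace $L_0$, orthogonal projection onto $L_0$ turns the covering into $K\vert L_0\subseteq\bigcup_i A_i$ inside $L_0\cong\mathbb{R}^k$, and Theorem~\ref{thm:kadets} in $L_0$ gives $\sum_i\rho_i\ge r(K\vert L_0;L_0)$. Now $r(K\vert L_0;L_0)\ge r_{(k)}(K)$: for any $x$ the slice $K\cap(L_0+x)$ lies in an affine plane projecting isometrically onto $L_0$ with image inside $K\vert L_0$, so $r(K\cap(L_0+x);L_0+x)\le r(K\vert L_0;L_0)$; taking the supremum over $x$ and then the infimum over $k$-planes gives the claim. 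This settles the parallel case and shows why the \emph{lower} intrinsic inradius is the right quantity on the right-hand side: $r^{(k)}(K)$ would be too strong (cf.\ the regular tetrahedron with $k=2$), while the Kakeya characterization of $r_{(k)}(K)$ is exactly what a simultaneous argument can use.

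\textbf{Step 3: general axes, and the main obstacle.} When the $L_i$ differ there is no common projection direction, so one must, as in Bang's proof, argue with all cylinders at once. Assume $\sum_i\rho_i<r_{(k)}(K)$ and try to produce a point of $K$ in no $D_i$: consider points $p+\sum_i v_i$ with ``moves'' $v_i\in L_i$ of prescribed lengths, chosen so as to maximize $\bigl\|\sum_i v_i\bigr\|$; the analogue of Bang's sign lemma, proved by the same one-variable reflection argument $v_j\mapsto-v_j$, forces the orthogonal projection $\pi_{L_j}\bigl(\sum_i v_i\bigr)$ to be a multiple $\ge1$ of $v_j$ for every $j$, and the slack $r_{(k)}(K)-\sum_i\rho_i>0$ together with the Kakeya property of $r_{(k)}$ is meant to place the relevant $k$-dimensional zonotope of candidate points inside $K$. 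The \textbf{main difficulty} is the final link: exiting the cylinder $D_j$ requires the move $v_j$ to point in a direction in which $A_j$ is \emph{as narrow as its inradius}, and for $k\ge2$ those directions are rigidly fixed by the shape of $A_j$ (a simplex of inradius $\rho$ lies in no slab of width $2\rho$), so one cannot choose $v_j$ with the freedom Bang has for a sign; and the naive Kadets-style alternative --- peel off $D_N$ by a halfspace avoiding it while losing only $\rho_N$ from the inradius --- is false, as a square cut by a thin central strip shows. Overcoming this is precisely what the ``multi-plank'' device of Theorem~\ref{thm:pants} is designed for: replacing each cylinder by a sufficiently flexible generalized plank so that the extremal/inductive step closes. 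The present theorem is then the special case of Theorem~\ref{thm:pants} in which the generalized planks are the cylinders $D_i$ of Step~1.
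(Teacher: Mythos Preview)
Your overall plan---reduce to Theorem~\ref{thm:pants}---is exactly the paper's route, and your Steps~1--2 are correct (if largely expository). The gap is in your final sentence: the cylinders $D_i=A_i+L_i^\bot$ are \emph{not} multi-planks in the sense of Definition~\ref{def:pants}, so Theorem~\ref{thm:pants} does not apply to them directly. A multi-plank is a specific non-convex object built from a finite generating set $V$; a convex cylinder over an arbitrary $k$-dimensional body $A_i$ is not of this form. You even say, one clause earlier, that each cylinder must be ``replaced by a sufficiently flexible generalized plank'', and then contradict yourself by identifying the generalized planks with the cylinders.

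The missing bridge is precisely Lemma~\ref{lem:simple}: given $C_i$ (or, equivalently, your cylinder $D_i$), one must produce a \emph{simple} multi-plank of rank $\le k$ and inradius exactly $r^{(k)}(C_i)$ that contains it. Your Step~1 is in fact the first half of that lemma's proof (pick the optimal $L_i$ and the inscribed ball $c+B_{\rho_i}\subset A_i$); what you have not done is the second half: take the contact points $c+v^1,\ldots,c+v^m$ of that ball with $\partial A_i$, check that the origin lies in the relative interior of $\conv\{v^1,\ldots,v^m\}$, and verify that the resulting simple multi-plank (Example~\ref{ex:pants3}) actually contains $C_i$. Once that is in place, the paper's proof of Theorem~\ref{thm:subadditivity} is two lines: apply Lemma~\ref{lem:simple} to each $C_i$, then apply Theorem~\ref{thm:pants}. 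Your Step~2 and the Bang-move discussion in Step~3 are interesting commentary on why the result is not trivial, but they play no role in the actual argument.
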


Most often Theorem~\ref{thm:bang} is formulated in terms of coverings by planks; in this formulation it answered Tarski's question~\cite{tarski1932further}. A \emph{plank} is the set of all points between two parallel hyperplanes. We will interpret Theorem~\ref{thm:subadditivity} in terms of coverings by certain non-convex ``planks'' (see Definition~\ref{def:pants} and Figures~\ref{fig:simple}, \ref{fig:winged} for examples) and adapt classical proofs of Theorems~\ref{thm:bang},~\ref{thm:kadets} to give a one-page proof of a more general plank theorem (Theorem~\ref{thm:pants}).

Another type of corollaries that can be immediately deduced from the main theorem is akin to the Davenport--Alexander problem (see~\cite{alexander1968problem}, where its relation to Bang's plank problem is explained), and a version of Conway's fried potato problem (see~\cite{bezdek1995solution}, especially Theorem~2 therein). A corollary of Theorem~\ref{thm:pants} tells us that one can arbitrarily apply a commercial pizza cutter to one's favorite pizza several times and still find a decently-sized slice.

\begin{theorem}
\label{thm:potato}
Let us call an \emph{$m$-fan} ($m \ge 2$) the union of $m$ rays in the plane with the same endpoint and with all angles $\frac{2\pi}{m}$. If the unit disk is partitioned by $m$-fans $S_1, \ldots, S_N$, then there is a piece of inradius at least $\frac{\sin \pi/m}{N + \sin \pi/m}$.
\end{theorem}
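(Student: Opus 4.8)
The plan is to deduce this from Theorem~\ref{thm:subadditivity} (itself a corollary of the main Theorem~\ref{thm:pants}) applied with $n = 2$ and $k = 2$, so that $r^{(2)} = r_{(2)} = r$ is the ordinary inradius. The obstacle is that the pieces $S_i$ are not convex — each $m$-fan cuts the plane into $m$ convex wedges, so after $N$ fans we have a partition into convex polygonal pieces, but the $S_i$ themselves are $1$-dimensional. So the first step is to recast "partitioned by $m$-fans" as a covering problem by sets to which the multi-plank theorem applies. Let me think about what an $m$-fan with apex $p$ does: its complement is the union of $m$ open wedges of angle $2\pi/m$ each. The key geometric observation is that an $m$-fan, viewed as a "multi-plank" in the sense of Definition~\ref{def:pants}, has a small "width parameter": a single wedge of half-angle $\pi/m$ contains a ball of radius $\sin(\pi/m)$ tangent to both bounding rays at distance $1$ from the apex, scaled appropriately. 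I expect the relevant statement is that the $m$-fan (or rather its role in the partition) is captured by a multi-plank $P_i$ with the property that any piece of the partition it induces, together with the "thickness" bound, feeds into the inequality $\sum_i (\text{thickness of } P_i) \ge r(K)$ minus a correction.

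Here is the concrete route I would take. Apply the $m$-fans one at a time; the resulting partition refines at each step. A natural candidate for the multi-plank associated to the $i$-th fan $S_i$ is the following: thicken $S_i$ slightly, or better, observe that the complement of $S_i$ is a disjoint union of $m$ wedges, and a wedge of half-angle $\pi/m$ with apex $p$ sits inside the complement of a disk: specifically, if $B$ is the disk of radius $\rho$ centered at distance $\rho / \sin(\pi/m)$ from $p$ along the wedge's bisector, then $B$ lies in the wedge. Equivalently, the $m$-fan $S_i$ is contained in a multi-plank whose "hyperplanes" are the $m$ rays and whose width-type parameter (the quantity that sums up in Theorem~\ref{thm:pants}) equals, after the right normalization, $1$ (a single "plank unit") while the inradius scale factor picks up $\sin(\pi/m)$. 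Running Theorem~\ref{thm:pants} (or directly Theorem~\ref{thm:subadditivity} after the appropriate reduction presented in the paper) then yields
\[
\sum_{i=1}^N r^{(2)}(C_i) \ge \sin(\pi/m)\cdot\bigl(r(K) - N\cdot\text{(slack)}\bigr),
\]
but the cleaner bookkeeping is to normalize so that each fan contributes exactly $1$ to a total of $N$, the disk contributes $\sin(\pi/m) \cdot (1 + \text{something})$, and solving for the largest piece gives inradius $\ge \dfrac{\sin(\pi/m)}{N + \sin(\pi/m)}$.

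The main obstacle, and where I would spend the most care, is making precise the sense in which an $m$-fan is a legitimate "multi-plank" in the framework of Definition~\ref{def:pants}: one must check that $m$ equiangular rays from a common point assemble into an object covered by the theorem's hypotheses, and correctly identify the width-type functional whose value is controlled by $\sin(\pi/m)$. Once the $m$-fan is matched with a multi-plank of the correct parameters, the inequality is a direct substitution into Theorem~\ref{thm:pants}, and the final arithmetic — finding the piece of largest inradius among pieces whose inradii sum to at least $r(K) = 1$ against a total "thickness budget" of $N$ — is the routine step that produces $\frac{\sin \pi/m}{N + \sin \pi/m}$. I would also double-check the extremal configuration (all apexes at the center, fans in "generic" rotations) to confirm the bound is the one the theorem predicts and is not off by a constant.
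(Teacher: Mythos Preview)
Your instinct---thicken each fan into a multi-plank and invoke Theorem~\ref{thm:pants}---is exactly the paper's route, but two concrete steps are never pinned down, and the ``final arithmetic'' paragraph misidentifies what is being summed.

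\textbf{The covering.} Let $r$ be the largest inradius among the pieces and fix any $\bar r>r$. If some $x\in B_{1-r}$ were at distance $\ge\bar r$ from every $S_i$, then a disk of radius slightly larger than $r$ about $x$ would lie in the unit disk and avoid all fans, contradicting the choice of $r$. Hence the open $\bar r$-neighborhoods $P_i$ of the $S_i$ cover $B_{1-r}$. This is the covering to which Theorem~\ref{thm:pants} applies; Theorem~\ref{thm:subadditivity} is not usable here, since the $P_i$ are not convex.

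\textbf{The multi-plank inradius.} The $\bar r$-neighborhood of an equiangular $m$-fan with apex $p$ is precisely (the translate by $p$ of) the simple multi-plank generated by the $m$ vertices of a regular $m$-gon inscribed in a circle of radius $\bar r/\sin(\pi/m)$; this is the planar instance of Example~\ref{ex:pants2}. So $r(V_i)=\bar r/\sin(\pi/m)$ for every $i$---not ``one plank unit''.

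With these two facts, Theorem~\ref{thm:pants} (with $k=2$) gives
\[
\frac{N\,\bar r}{\sin(\pi/m)}\ \ge\ r_{(2)}(B_{1-r})\ =\ 1-r,
\]
and letting $\bar r\downarrow r$ yields $r\ge\dfrac{\sin(\pi/m)}{N+\sin(\pi/m)}$. At no point do the inradii of the \emph{pieces} get summed to $r(K)=1$; the sum is over the multi-plank inradii $r(V_i)$, and the right-hand side is $1-r$, not $1$. Your displayed inequality with ``$\sin(\pi/m)\cdot(r(K)-N\cdot\text{slack})$'' and the subsequent bookkeeping do not correspond to any step of the actual argument.
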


In the case $m=2$ it recovers a well-known Davenport-type result, which is equivalent to Tarski's plank problem for disk.

Section~\ref{sec:plank} introduces our notion of a \emph{multi-plank} and gives several examples. The main theorem in Section~\ref{sec:pants} is followed by the proofs of Theorem~\ref{thm:subadditivity} and Theorem~\ref{thm:potato} (together with its higher-dimensional generalizations). Section~\ref{sec:voronoi} establishes further properties of multi-planks with the hope to illustrate the concept and make Definition~\ref{def:pants} less obscure.

Section~\ref{sec:normed} discusses to what extent the main theorem generalizes to the case when $\mathbb{R}^n$ is endowed with a non-Euclidean norm, whose unit ball need not be centrally symmetric, generally speaking. The normed counterparts of Theorems~\ref{thm:bang}~and~\ref{thm:kadets} are widely open questions. The former, known as Bang's conjecture on relative widths, is solved by K.~Ball~\cite{ball1991plank} for the case when the unit ball is centrally symmetric. The latter is far less understood, with some progress towards the case of partitions (instead of coverings) made in~\cite{akopyan2012kadets}.

\subsection*{Acknowledgements} The most general multi-plank definition was hinted to me by Roman Karasev, whom I thank sincerely. I am also grateful to Alexandr Polyanskii for the fruitful discussions that we had. I thank Aleksei Pakharev for his help with the figures. Finally, I thank the referee, whose comments improved the exposition.

\section{Multi-planks}
\label{sec:plank}

\begin{definition}
\label{def:pants}
Let $V = \{v^{1}, \ldots, v^{m}\}$, $m \ge 2$, be a set of points in $\mathbb{R}^n$, such that the closed ball with the smallest radius containing $V$ is centered at the origin. Denote by $r(V)$ its radius. %Denote $Z^{j} = v^{j} - A_V^{j}$ the reversed and shifted anti-Vorono\u{\i} cell.
\begin{enumerate}
  \item The set
  \[
  P = \left\{ x \in \mathbb{R}^n ~\middle\vert~ \forall j \in [m] ~ \exists j' \in [m] \text{ such that } |x| < |x - v^j + v^{j'}|  \right\}
  \]
  will be called the \emph{open centered multi-plank} generated by $V$.
  \item The closure $\overline{P}$ of $P$ will be called the \emph{closed centered multi-plank} generated by $V$.
  \item A \emph{multi-plank} generated by $V$ is a translate of $P$ or $\overline{P}$.
\end{enumerate}
In all these cases, the radius $r(V)$ will be called the \emph{inradius} of a multi-plank (this word choice will be justified by Lemma~\ref{lem:inradius}). The dimension of the affine hull of $V$ will be called the \emph{rank} of a multi-plank.
\end{definition}

\begin{example}
\label{ex:pants1}
If $V = \{u, -u\}$ for $0 \neq u \in \mathbb{R}^n$, then the corresponding rank $1$ (open centered) multi-plank is just the ordinary (open) plank
\[
P = \left\{x \in \mathbb{R}^n ~\middle\vert~ -|u|^2 < \langle x, u \rangle < |u|^2 \right\}.
\]
\end{example}

\begin{example}
\label{ex:pants2}
Let $V = \{v^1, \ldots, v^{n+1}\} \subset \mathbb{R}^n$ be a set of affinely independent vectors of length $r$ whose convex hull contains the origin in its interior. It follows that the smallest ball containing $V$ is $B_r$, the ball of radius $r$ centered at the origin. The corresponding rank $n$ (open centered) multi-plank $P$ can be described as follows. For each $j \in [n+1]$, draw the tangent hyperplane $H^j$ to the ball $B_r$ at the point $v^j$. Those hyperplanes bound a simplex $\Delta$. Consider the union $F$ of rays with the common endpoint at the origin that intersect the $(n-2)$-skeleton of $\Delta$. It is easy to check that $F$ is the fan dividing space into convex regions (in Section~\ref{sec:voronoi} it will be explained how these regions are related to the \emph{Vorono\u{\i} diagram} of $V$). The multi-plank $P$ looks like a thickened fan $F$, with the widths of its ``wings'' defined so that $\partial P$ passes through each of the $v^j$. (See Figure~\ref{fig:simple} for an example.)
\end{example}

\begin{figure}[ht]
\centering
\includegraphics[width=0.9\textwidth]{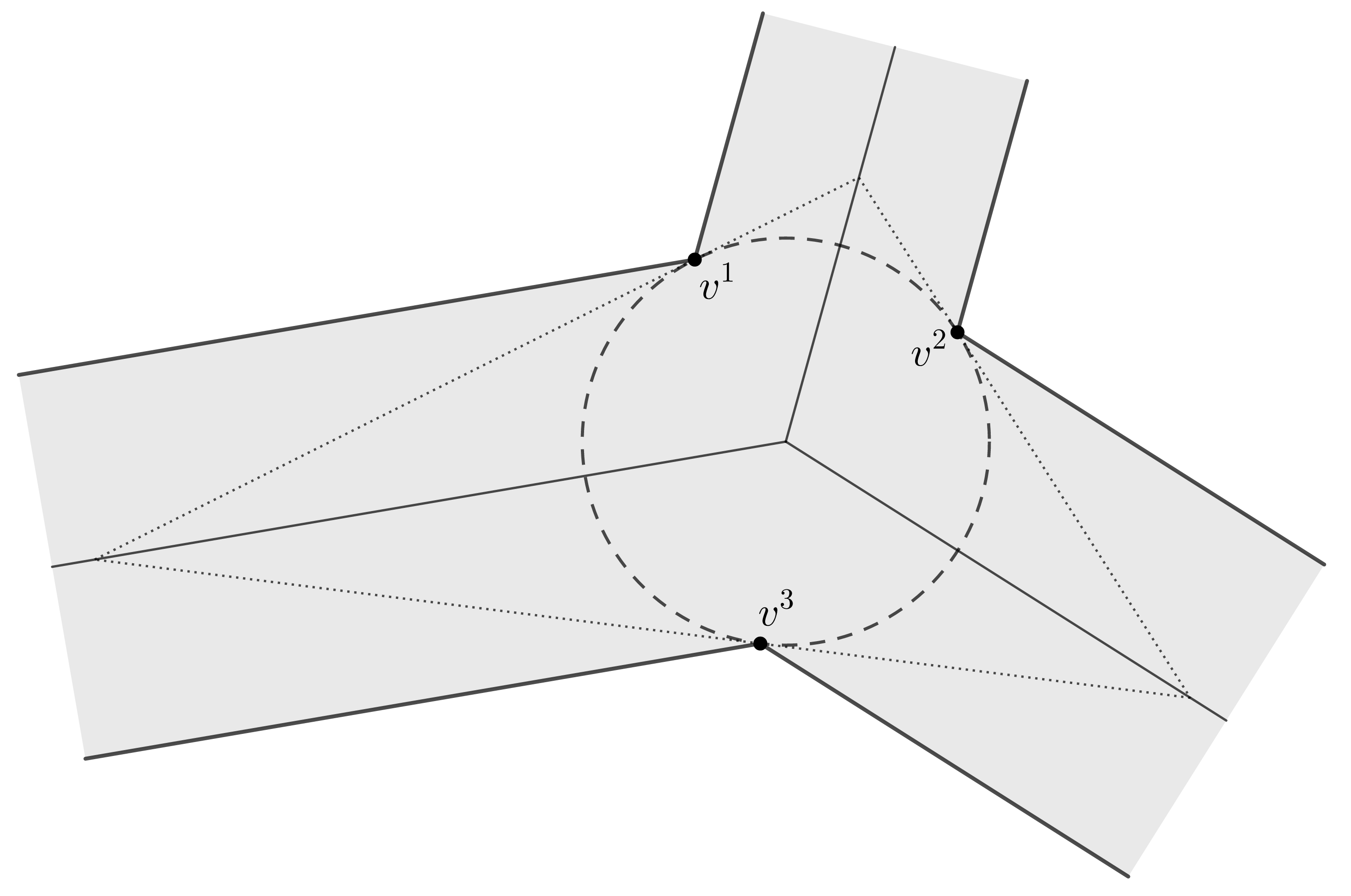}
\caption{A simple multi-plank in the plane}
\label{fig:simple}
\end{figure}

\begin{example}
\label{ex:pants3}
Let $V = \{v^1, \ldots, v^{k+1}\} \subset \mathbb{R}^n$, $1 \le k \le n$, be a set of vectors of length $r$ whose convex hull is $k$-dimensional and contains the origin in its relative interior. It follows that the smallest ball containing $V$ is $B_r$, the ball of radius $r$ centered at the origin. The corresponding rank $k$ (open centered) multi-plank $P$ is the Minkowski sum of the $k$-dimensional multi-plank generated by $V$ in its affine hull (as in the previous example) with the orthogonal $(n-k)$-dimensional subspace.
\end{example}

The multi-planks as in the examples above (and their closures) will be called \emph{simple}. An example of a non-simple multi-plank can be obtained, for instance, if one takes $V$ consisting of three vectors whose endpoints form an obtuse triangle: $v^1 = -v^2$ and $|v^3| < |v^1|$. Another important family of non-simple multi-planks shows up in Proposition~\ref{prop:coverplanks} (see Figure~\ref{fig:winged}).

\begin{lemma}
\label{lem:simple}
Let $1 \le k \le n$. Any open (closed) convex set $C$ with finite intrinsic radius $r^{(k)}(C)$ can be placed inside an open (closed) simple multi-plank of rank at most $k$ and inradius $r^{(k)}(C)$.
\end{lemma}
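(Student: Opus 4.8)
\emph{Reduction to a projection.} The plan is to pass to a projection of $C$ onto a $k$-dimensional subspace realizing $r^{(k)}(C)$, reduce to the case $n=k$, and then build the multi-plank from some of the points where the inball of that projection touches its boundary. Set $\rho:=r^{(k)}(C)$; since a multi-plank has positive inradius we may assume $\rho>0$. Pick a $k$-dimensional subspace $L\subseteq\mathbb{R}^n$ with $r(C\vert L;L)=\rho$ (the infimum defining $r^{(k)}(C)$ is attained by a routine compactness argument, which I omit). After translating $C$ we may assume that the inball of $D:=C\vert L$, computed inside $L$, is the closed ball $B_\rho$ of radius $\rho$ centered at the origin of $L$; note that $D$ is full-dimensional, as it contains $B_\rho$, and that $D$ is open if $C$ is. For a simple multi-plank $P$ of $L$, the set $P+L^\perp$ is again a simple multi-plank — now of $\mathbb{R}^n$ — of the same rank and inradius; this follows by applying Example~\ref{ex:pants3} to the generators of $P$, first inside $L$ and then inside $\mathbb{R}^n$. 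Since $C\subseteq(C\vert L)+L^\perp=D+L^\perp$, it therefore suffices to find a simple multi-plank $P$ of $L$ of rank at most $k$ and inradius $\rho$ with $D\subseteq P$ if $D$ is open and with $D\subseteq\overline P$ in general (the former gives $C\subseteq D+L^\perp\subseteq P+L^\perp$ when $C$ is open; the latter gives $C\subseteq D+L^\perp\subseteq\overline P+L^\perp=\overline{P+L^\perp}$ when $C$ is closed). So from now on $n=k$, and we work with the full-dimensional convex set $D\subseteq\mathbb{R}^k$ whose inball is $B_\rho$.

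\emph{Choosing the generators.} By the classical fact that the center of the inball of a convex body lies in the convex hull of the contact set $\partial B_\rho\cap\partial D$, together with Carath\'eodory's theorem and the standard elimination of affine dependences, there is an affinely independent set $V=\{v^1,\dots,v^{k'+1}\}\subseteq\partial B_\rho\cap\partial D$, with $1\le k'\le k$, such that $0\in\rint\conv(V)$; write $0=\sum_i\lambda_i v^i$ with all $\lambda_i>0$ and $\sum_i\lambda_i=1$, and recall that $|v^i|=\rho$ for every $i$. If a ball $B(c,\rho')$ with $\rho'<\rho$ contained $V$, then $|v^i-c|^2<|v^i|^2$ would force $\langle v^i,c\rangle>0$ for every $i$, contradicting $\sum_i\lambda_i\langle v^i,c\rangle=\langle 0,c\rangle=0$. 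Hence $B_\rho$ is the smallest ball containing $V$, so $V$ generates a simple multi-plank $P$ (of the kind in Examples~\ref{ex:pants1}--\ref{ex:pants3}) of rank $k'\le k$ and inradius $\rho$. It remains to show $\inte D\subseteq P$.

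\emph{The containment.} For each $i$ the hyperplane $H^i:=\{y:\langle y,v^i\rangle=\rho^2\}$ is tangent to $B_\rho$ at $v^i$, hence supports $D$ there; thus $D\subseteq\{y:\langle y,v^i\rangle\le\rho^2\}$ and $\inte D\subseteq\{y:\langle y,v^i\rangle<\rho^2\}$ for every $i$. Suppose, for contradiction, $x\in\inte D$ with $x\notin P$. Since $x\in D$, Definition~\ref{def:pants} provides an index $j$ such that $|x-v^j+v^{j'}|\le|x|$ for every $j'$; squaring and using $|v^j|=|v^{j'}|=\rho$, this rearranges to $\langle v^j-x,v^{j'}\rangle\ge\rho^2-\langle x,v^j\rangle$ for every $j'$. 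Multiplying the inequality with index $j'$ by $\lambda_{j'}$ and summing over $j'$, the left-hand side collapses to $\langle v^j-x,\sum_{j'}\lambda_{j'}v^{j'}\rangle=0$, so $\rho^2-\langle x,v^j\rangle\le 0$; this contradicts $\langle x,v^j\rangle<\rho^2$. Hence $\inte D\subseteq P$. Finally $D$ is convex with nonempty interior, so $D\subseteq\overline D=\overline{\inte D}\subseteq\overline P$, which settles the case of closed $C$; and if $C$ is open then $D$ is open, so $D=\inte D\subseteq P$, which settles that case.

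I expect the decisive step to be the computation in the last paragraph — rewriting the non-convex membership condition of Definition~\ref{def:pants} as the affine inequalities $\langle v^j-x,v^{j'}\rangle\ge\rho^2-\langle x,v^j\rangle$ and averaging them against the barycentric weights $\lambda_{j'}$ so as to trap $x$ on a supporting hyperplane of $D$. The remaining points — attainment of the infimum defining $r^{(k)}$, the fact that the projection of a closed convex set need not be closed (which is why the argument is routed through $\inte D$), and the identity $\overline P+L^\perp=\overline{P+L^\perp}$ — are routine.
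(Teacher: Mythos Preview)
Your proof is correct and follows essentially the same route as the paper's: pass to an optimal $k$-subspace, take the contact points of the inball with the boundary of the projection so that the origin lies in their relative interior, and then derive from $x\notin P$ the inequality $\langle x,v^j\rangle\ge\rho^2$ contradicting the supporting half-space condition. The only cosmetic difference is that you average the inequalities $\langle v^j-x,v^{j'}\rangle\ge\rho^2-\langle x,v^j\rangle$ against the barycentric weights $\lambda_{j'}$, whereas the paper simply picks one $v^{j'}$ with $\langle x-v^j,v^{j'}\rangle\ge0$ (which exists precisely because the weighted sum vanishes); your handling of the open/closed dichotomy via $\inte D$ and closures is a slightly more careful variant of the paper's direct strict/non-strict bookkeeping.
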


\begin{proof}
The quantity $r(C \vert L; L)$, where $L$ is a $k$-dimensional subspace, depends on $L$ in a lower semi-continuous way (in fact, it is continuous whenever finite). Therefore, we can pick $L$ delivering minimum in the definition $r^{(k)}(C) = \inf\limits_{\dim L = k} r(C \vert L; L)$. Let $c+B_r$ be the largest $k$-ball in $C \vert L \subset L$. Next, there are points $c + v^1, \ldots, c + v^{m} \in L$ (for some $2 \le m \le k+1$) in the intersection of (relative) boundaries of $C\vert L$ and of $c+B_r$, ``certifying'' that $r$ was indeed maximal, that is, the following two conditions are satisfied.
\begin{enumerate}[label=(C\arabic*)]
  \item\label{cond:1} The set $C$ lies in the intersection of halfspaces
\[
\left\{ x \in \mathbb{R}^n ~\middle\vert~ \langle x - c, v^j \rangle < r^2 \right\},
\]
over $j \in [m]$. (In the case when $C$ is closed, the inequalities are non-strict.)

  \item\label{cond:2} The point $c$ is contained in the relative interior of $\conv \{c + v^1, \ldots, c + v^{m}\}$.
\end{enumerate}

Let us show that the multi-plank $c+P$, where $P$ is generated by $V = \{v^1, \ldots, v^{m}\}$, as in Example~\ref{ex:pants3}, will do. By shifting everything, we can assume $c$ coincides with the origin. Suppose there exists a point $x \in C \setminus P$. By definition of the multi-plank $P$, there is $j \in [m]$ such that
\[
\left|x\right| \ge \left|x - v^j + v^{j'}\right| \quad \mbox{for all } v^{j'} \in V.
\]
(In the closed case, the inequalities are strict.)
Therefore
\begin{align*}
  \left|x\right|^2 &\ge \left|x - v^j + v^{j'}\right|^2 \\
  &= \left|x\right|^2 + 2r^2 - 2\left\langle x,v^j \right\rangle + 2\left\langle x-v^j,v^{j'} \right\rangle.
\end{align*}

By condition~\ref{cond:2} above it is possible to pick $v^{j'} \in V$ such that $\langle x - v^j, v^{j'} \rangle \ge 0$, hence having
\[
\left\langle x,v^j \right\rangle \ge r^2 + \left\langle x-v^j,v^{j'} \right\rangle \ge r^2,
\]
which contradicts condition~\ref{cond:1}.
\end{proof}

\section{Multi-plank theorem}
\label{sec:pants}

Now we are ready to state the main result. The proof follows closely the ideas of Bang and Kadets. Our exposition also makes use of a trick by Bogn\'ar~\cite{bognar1961w}.

Recall from Definition~\ref{def:pants} that if a multi-plank $P$ is generated by $V$, then $r(V)$ denotes the radius of the smallest closed ball containing $V$; we call it the \emph{inradius} of $P$, even though this word choice is not yet justified.

\begin{theorem}
\label{thm:pants}
If a convex set $K \subset \mathbb{R}^n$ is covered by multi-planks $P_1, \ldots, P_N$ of rank at most $k$ then
\[
\sum\limits_{i=1}^N r(V_i) \ge r_{(k)}(K),
\]
where $V_i$ is the generating set for $P_i$.
\end{theorem}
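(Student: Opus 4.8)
The plan is to argue by contraposition, adapting the extremal (reflection) arguments of Bang and Kadets, with the rôle of Bang's width played by $r_{(k)}$ and the rôle of Kadets's device of controlling a displacement by a sum of radii kept intact. Assume $\sum_{i=1}^N r(V_i) < r_{(k)}(K)$; I want to exhibit a point of $K$ lying in none of the $P_i$. It is harmless to assume the multi-planks are open: scaling $V_i$ by a factor $1-\eps$ shrinks a closed multi-plank into an open one of slightly smaller inradius, and the non-strict inequalities produced below already witness non-membership in an open multi-plank.

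Write $P_i = b_i + P_i^0$, where $P_i^0$ is the centered multi-plank generated by $V_i = \{v_i^1,\dots,v_i^{m_i}\}$, so $|v_i^j| \le r(V_i)$ for all $i,j$. Using the Kakeya-type description of $r_{(k)}$ in Definition~\ref{def:inrad}, pick a $k$-dimensional affine flat $F = c + L$ such that $K \cap F$ contains a $k$-ball $B$ of radius $\rho > \sum_i r(V_i)$ centered at $c$; thus $B \subseteq K$. Over the finite set of tuples $(j_i)_{i=1}^N \in \prod_i[m_i]$, let $(j_i^\ast)_i$ maximize
\[
\Phi\big((j_i)_i\big) \;=\; \Big|\, c+\textstyle\sum_{i=1}^N v_i^{j_i}\,\Big|^2 \;-\; 2\sum_{i=1}^N\langle v_i^{j_i},\,b_i\rangle ,
\]
and set $q := c + \sum_{i=1}^N v_i^{j_i^\ast}$.

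Because $(j_i^\ast)_i$ is a maximizer, replacing one coordinate $j_i^\ast$ by an arbitrary $j'\in[m_i]$ cannot increase $\Phi$; expanding the square, this inequality says precisely that $2\langle q-b_i,\,v_i^{j_i^\ast}-v_i^{j'}\rangle \ge |v_i^{j_i^\ast}-v_i^{j'}|^2$, equivalently $|q-b_i| \ge |q-b_i-v_i^{j_i^\ast}+v_i^{j'}|$. Since this holds for every $j'\in[m_i]$, the point $q-b_i$ fails the defining condition of $P_i^0$ with witness $j=j_i^\ast$, so $q\notin P_i$; and this is so for each $i$. Finally $|q-c| = |\sum_i v_i^{j_i^\ast}| \le \sum_i r(V_i) < \rho$, and — invoking that each $P_i$ has rank at most $k$, which is where the hypothesis is used — the displacement $q-c$ can be arranged to lie in the direction space $L$ of the flat $F$, so that $q\in B\subseteq K$, contradicting $K\subseteq\bigcup_i P_i$. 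Theorem~\ref{thm:subadditivity}, and hence the Bang and Kadets theorems, then follow by feeding Lemma~\ref{lem:simple} into Theorem~\ref{thm:pants}.

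The step I expect to be the real obstacle is the last one: a priori $q-c = \sum_i v_i^{j_i^\ast}$ lies in $\sum_i\lin V_i$, which may have dimension larger than $k$, so it need not lie in $L$. Overcoming this is where a Bognár-type refinement enters. I would attempt it by enlarging the extremal problem — optimizing jointly over the anchor $c$ (inside a $k$-ball in $K$), over the flat $F$ in the relevant Grassmannian, and over $(j_i)_i$ — so that the stationarity conditions in $c$ and $F$ pin $q-c$ into $L$; alternatively, using the splittings $P_i = P_i' \oplus L_i^\bot$ of rank-$\le k$ multi-planks one reduces, by orthogonal projection onto a suitable subspace, to the case where all the $\lin V_i$ already lie in a common $k$-dimensional subspace, where the argument above is complete as written.
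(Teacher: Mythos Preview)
Your $\Phi$-functional argument is correct and, in fact, neatly sidesteps the paper's Step~1: the paper lifts to $\mathbb{R}^{n+1}$ via Bogn\'ar's cone construction solely to reduce to the centered case, whereas your linear correction term $-2\sum_i\langle v_i^{j_i},b_i\rangle$ handles the shifts $b_i$ directly. So the extremal step (your Step~5) is complete as written.

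The genuine gap is exactly where you flag it, and neither of your proposed fixes is the right one. Optimizing over the flat $F$ cannot help: the vector $q-c=\sum_i v_i^{j_i^\ast}$ is determined by the $V_i$ and the $b_i$, not by $F$, so no stationarity condition in $F$ will force it into~$L$. Projection onto a common $k$-subspace fails because the $V_i$ live in \emph{different} $k$-subspaces $\lin V_i$, and projecting destroys the covering of $K$.

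The missing idea is this: do not try to place $q$ in a single $k$-flat section of $K$. Instead, place the \emph{entire Bang set} $X=V_1+\dots+V_N$ in a translate of $K$, and then let $c$ be chosen so that $c+X\subset K$. The rank hypothesis enters precisely here, but applied to each $V_i$ separately: since $\dim\lin V_i\le k$ (note $0\in\conv V_i$, being the circumcenter) and $V_i$ sits in a ball of radius $r(V_i)$, the Kakeya description of $r_{(k)}$ says $V_i$ fits in a translate of $\dfrac{\alpha\, r(V_i)}{r_{(k)}(K)}\,K$ for any $\alpha>1$. Convexity of $K$ then Minkowski-sums these:
\[
X=V_1+\dots+V_N\ \subset\ s+\frac{\alpha\sum_i r(V_i)}{r_{(k)}(K)}\,K\ \subset\ s+K
\]
for $\alpha$ close enough to $1$, using $\sum_i r(V_i)<r_{(k)}(K)$. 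Take $c=s$; then every point of $c+X$, in particular your maximizer $q$, lies in $K$, and your $\Phi$-argument finishes the proof. This is exactly the paper's Step~2.1, and once you have it your route is actually shorter than the paper's, since you never need the Bogn\'ar lift.
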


\begin{proof} Every closed multi-plank can be covered by an open one of almost the same inradius; so without loss of generality we assume that the multi-planks are open.

It suffices to consider the case when $K$ is bounded, i.e., $K \subset B_R$ for some $R$. If not, we apply theorem for $K \cap B_R$ and pass to the limit $R \to \infty$; here we use $r_{(k)}(K \cap B_R) \to r_{(k)}(K)$ as $R \to \infty$.

First we reduce the problem to the case of centered multi-planks and then deal separately with the centered case (this strategy can be traced back to Bogn\'ar~\cite{bognar1961w}).

\textbf{Step 1.} We think of $\mathbb{R}^n$ as a coordinate subspace $H \subset \mathbb{R}^{n+1}$; say, $H = \{(x, 0) \in \mathbb{R}^{n+1} ~\vert~ x \in \mathbb{R}^n\}$. Now both the set $K$ and the multi-planks $P_i$ sit inside $\mathbb{R}^{n+1}$. Pick a point $O = (0_n,D)$ very far from the origin; here $0_n$ is the origin of $\mathbb{R}^n$ and $D \in \mathbb{R}$ is large. For each $i$, build the cylinder $C_i = (P_i \cap B_R) + \ell_i$, where $\ell_i$ is the line passing through $O$ and through the center of $P_i$. Those cylinders cover the cone $\widehat K = \conv(K \cup \{O\})$. There are two statements to check:
\begin{enumerate}
  \item each $C_i$ can be covered by a multi-plank of the same rank as $P_i$, centered at $O$, and of inradius close to $r(V_i)$ (the proximity depends on $D$);
  \item the intrinsic inradius $r_{(k)}\left(\widehat K\right)$ is close to $r_{(k)}(K; H)$. (The notation $r_{(k)}(\cdot; H)$ is to specify the ambient space where the intrinsic inradius is measured.)
\end{enumerate}

For the first one, notice that $C_i$ splits as the orthogonal product of $\ell_i$ and of the set $A(D)$ which is an affine copy of $P_i \cap B_R$ shrunk negligibly (as long as $D$ is large) along one direction. The reader can convince themselves that $A(D)$ can be covered by a scaled copy of $P_i \cap B_R$ with the homothety coefficient tending to $1$ as $D \to \infty$. This explains the first statement.

For the second claim, fix $d \in \mathbb{R}$ large enough so that $r_{(k)}(K; H) = r_{(k)}(K+[0,d])$; here $K+[0,d]$ is a shorthand for $K + [(0_n,0),(0_n,d)] \subset \mathbb{R}^{n+1}$. Now observe that $\widehat K \cap (K+[0,d])$ converges to $K+[0,d]$ in the Hausdorff metric, as $D \to \infty$. One can check that the function $r_{(k)}(\cdot)$ is Hausdorff continuous, so we can write
\[
r_{(k)}(K; H) \ge r_{(k)}\left(\widehat K\right) \ge r_{(k)}\left(\widehat K \cap (K+[0,d])\right) \underset{D \to \infty}\longrightarrow r_{(k)}(K+[0,d]) = r_{(k)}(K; H).
\]

Now, applying the theorem in the centered case, we get the desired inequality with a small error term, which decays as $D \to \infty$.

\textbf{Step 2.}
Now we can assume that all the $P_i$ are centered at the origin.
The proof here follows closely the ideas from original papers by Bang and Kadets with certain simplifications.
Assume the contrary to the statement of theorem:
\[
\alpha = \frac{r_{(k)}(K)}{\sum\limits_{i=1}^N r(V_i)} > 1.
\]
We define the \emph{Bang set}
\[
X = \left\{ \sum_{i=1}^{N} v_i^{j_i} ~\middle\vert~ 1 \le j_i \le m_i \right\},
\]
where $V_i = \{v_i^{1}, \ldots, v_i^{m_i}\}$ is the generating set of $P_i$. The strategy of the proof is to show that $X$ can be covered by a translate of $K$ (assuming the contrary to the statement of theorem) but at the same time $X$ does not fit into $\bigcup P_i$.

\textit{Step 2.1.} The Bang set splits as the Minkowski sum of the generating sets of the multi-planks: $X = V_1 + \ldots + V_N$. By the definition of the lower intrinsic radius, $V_i$ can be covered by a translate of $\frac{r(V_i)}{r_{(k)}(K)} \overline{K}$ (where bar denotes closure), hence by a translate of $\frac{\alpha r(V_i)}{r_{(k)}(K)} K$. Therefore, for some translation vector $s \in \mathbb{R}^n$,
\[
X = V_1 + \ldots + V_N \subset s + \frac{\alpha r(V_1)}{r_{(k)}(K)} K + \ldots + \frac{\alpha r(V_N)}{r_{(k)}(K)} K = s + K.
\]

\textit{Step 2.2.} Suppose $X \subset s + \bigcup P_i$, $s \in \mathbb{R}^n$. Consider the farthest from the origin point in $X-s$; let it be $x = -s + \sum_{i=1}^{N} v_i^{j_i}$. Fix $i$ and consider the family of vectors $(x - v_i^{j_i}) + v_i^{j_i'}$, over $j_i' \in [m_i]$. Since $x$ is the longest among them, it follows that $x \notin P_i$. Repeating this over all $i$, we get a contradiction.
\end{proof}

\begin{proof}[Proof of Theorem~\ref{thm:subadditivity}]
If a convex set $K$ is covered by convex sets $C_1, \ldots, C_N$, then each $C_i$ can be covered by a simple closed multi-plank with inradius $r^{(k)}(C_i)$ (see Lemma~\ref{lem:simple}). Now Theorem~\ref{thm:pants} implies the desired inequality:
\[
\sum\limits_{i=1}^N r^{(k)}(C_i)  \ge r_{(k)}(K).
\]
\end{proof}

\begin{proof}[Proof of Theorem~\ref{thm:potato}]
Let $\alpha_F = \frac{\pi}{m}$. Suppose the contrary, and denote $r < \frac{\sin \alpha_F}{N + \sin \alpha_F}$ the radius of the largest disk inscribed in the partition by fans. Pick a number $\overline r$ between $r$ and $\frac{\sin \alpha_F}{N + \sin \alpha_F}$. Then the disk $B_{1-r}$ is covered by the multi-planks $P_1, \ldots, P_N$, where $P_i$ is the $\overline r$-neighborhood of $S_i$. The inradius of each multi-plank equals $\frac{\overline r}{\sin \alpha_F}$, so using Theorem~\ref{thm:pants} one gets the inequality
\[
\frac{N \overline r}{\sin \alpha_F} \ge 1-r > \frac{N}{N + \sin \alpha_F},
\]
contradicting the assumption $\overline r < \frac{\sin \alpha_F}{N + \sin \alpha_F}$.
\end{proof}

Theorem~\ref{thm:potato} can be generalized to higher dimensions in the evident way; the only difficulty is to write down the guaranteed inradius in terms of the class of ``pizza cutters''. In the examples below, the cutter shape is given by a certain fan $F$, dividing $\mathbb{R}^n$ into convex cones so that $F$ cuts out in the unit sphere $S^{n-1}$ a bunch of regions all having the same inradius $\alpha_F$ in the intrinsic sphere metric $\dist_{S^{n-1}}(\cdot,\cdot)$.
\begin{enumerate}
  \item One can define an $m$-fan $F$ in $\mathbb{R}^n$ as the union of $m$ half-planes of dimension $(n-1)$ sharing the same boundary $(n-2)$-subspace, and with the dihedral angles all equal to $2 \alpha_F = \frac{2\pi}{m}$.
  \item For every regular polytope $C \subset \mathbb{R}^n$ centered at the origin, one can consider the fan $F$ consisting of the rays from the origin passing through the $(n-2)$-skeleton of $C$. The regions cut out by $F$ in the unit sphere are all congruent since $C$ is regular. For example, in the case of regular simplex, $\alpha_F = \arccos \frac{1}{n}$.
  \item For every Coxeter hyperplane arrangement $\mathcal{A}$ in $\mathbb{R}^n$ (that is, the set of hyperplanes passing through the origin and generating a finite reflection group), one can consider the fan $F$ consisting of the hyperplanes of $\mathcal{A}$. The regions cut out by $F$ in the unit sphere are all congruent since the reflection group acts transitively on the Weyl chambers. For example, in the case of type $A_n$ reflection group, $\alpha_F = \arccos \sqrt{\frac{3}{2(n-1)n(n+1)}}$.
\end{enumerate}

All these examples are subsumed by the following more general Davenport-type theorem, whose proof is literally the same as the one of Theorem~\ref{thm:potato}.

\begin{theorem}
\label{thm:multipotato}
Let $G$ be a finite subgroup of $\mbox{SO}(n)$, acting on the unit sphere, and let $O$ be the $G$-orbit of any point from the unit sphere. The Vorono\u{\i} diagram of the set $O$ gives rise to a fan $F \subset \mathbb{R}^n$ as follows: by definition, $x \in F$ if the function $f(y) = |x-y|, y \in O$, attains its minimum in at least two orbit points. The regions cut out by $F$ in the unit sphere are all congruent since $G$ acts transitively on them, and their spherical inradius is
\[
\alpha_F = \frac12 \min\limits_{y_1 \neq y_2 \in O} \dist_{S^{n-1}}(y_1,y_2).
\]
Now, if one cuts the unit ball of $\mathbb{R}^n$ by $N$ congruent copies of $F$, there will be a piece of inradius at least $\frac{\sin \alpha_F}{N + \sin \alpha_F}$.
\end{theorem}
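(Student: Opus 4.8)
The plan is to run the proof of Theorem~\ref{thm:potato} essentially verbatim; the only new input is a replacement for the observation made there that the $\overline r$-neighborhood of an $m$-fan is a multi-plank of inradius $\overline r/\sin(\pi/m)$. So everything hinges on the following claim: for every $\mu>0$, the closed $\mu$-neighborhood of any congruent copy of $F$ is contained in a multi-plank of rank at most $n$ and inradius at most $\mu/\sin\alpha_F$.

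To prove this claim I would argue as follows. Since $O$ lies on the unit sphere centered at the origin, the sets $\{x:\langle x,v\rangle\ge\langle x,v'\rangle\}$ with $v,v'\in O$ are half-spaces through the origin, so $F$ is a complete polyhedral fan whose maximal cones are the Vorono\u{\i} cones of the points of $O$. Let $c$ be the circumcenter of $O$ (which is $G$-invariant, hence lies in the fixed space of $G$), let $\rho=\dist(c,O)$, and set $V=\lambda(O-c)$ with $\lambda=\mu/\sin\alpha_F$. Then $V=\{v^1,\dots,v^m\}$ has circumcenter at the origin, its Vorono\u{\i} fan is again $F$ (scaling and this translation do not affect a fan, and the term $\langle x,c\rangle$ cancels in every bisector condition), and the data $V$ generates a multi-plank $P_V$ of rank $\dim\aff O\le n$ and inradius $r(V)=\lambda\rho=\mu\rho/\sin\alpha_F\le\mu/\sin\alpha_F$ (with equality exactly when $c$ is the origin). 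It remains to check that $P_V$ contains the $\mu$-neighborhood of $F$. Unwinding Definition~\ref{def:pants}: a point $x$ in the Vorono\u{\i} cone of $v^{j_0}$ lies in $P_V$ as soon as, for some $j'$, the point $x$ lies strictly on the $v^{j_0}$-side of the wall $B_{j_0j'}=\{y:\langle y,v^{j_0}\rangle=\langle y,v^{j'}\rangle\}$ and within Euclidean distance $\tfrac12|v^{j_0}-v^{j'}|$ of it; a direct computation using $|v^{j_0}|=|v^{j'}|$ shows that $\tfrac12|v^{j_0}-v^{j'}|$ is precisely the distance from $v^{j_0}$ to $B_{j_0j'}$. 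Since $\tfrac12|v^{j_0}-v^{j'}|\ge\tfrac12\min_{j'}|v^{j_0}-v^{j'}|=\lambda\sin\alpha_F=\mu$ for every $j'$, and since the nearest point of a Vorono\u{\i} site is always one of its Vorono\u{\i} neighbours, any $x$ with $\dist(x,F)<\mu$ fulfills this condition for the index $j'$ realizing $\dist(x,F)$. Here the transitivity of $G$ on $O$ is used to know that $\tfrac12\min_{j'}|v^{j_0}-v^{j'}|$ does not depend on $j_0$ and equals the single quantity $\mu$; without such symmetry the multi-plank of Definition~\ref{def:pants} is a genuinely non-uniform thickening of its fan.

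Granting the claim, the rest copies the proof of Theorem~\ref{thm:potato}. Assume for contradiction that cutting the unit ball by congruent copies $S_1,\dots,S_N$ of $F$ leaves no piece of inradius $\ge\frac{\sin\alpha_F}{N+\sin\alpha_F}$; let $r<\frac{\sin\alpha_F}{N+\sin\alpha_F}$ be the largest radius of a ball inscribed in a piece, and fix $\overline r$ with $r<\overline r<\frac{\sin\alpha_F}{N+\sin\alpha_F}$. Then $B_{1-r}$ is covered by the $\overline r$-neighborhoods $P_1,\dots,P_N$ of $S_1,\dots,S_N$, since otherwise, pushing a would-be uncovered point slightly toward the origin, one would inscribe in a single piece a ball of radius exceeding $r$. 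By the claim each $P_i$ is a multi-plank of rank at most $n$ with inradius at most $\overline r/\sin\alpha_F$, so Theorem~\ref{thm:pants} applied with $k=n$, together with $r_{(n)}(B_{1-r})=1-r$, gives
\[
\frac{N\overline r}{\sin\alpha_F}\ \ge\ 1-r\ >\ \frac{N}{N+\sin\alpha_F},
\]
whence $\overline r>\frac{\sin\alpha_F}{N+\sin\alpha_F}$, a contradiction.

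The main obstacle is the claim, and inside it the containment of the $\mu$-neighborhood of $F$ in the multi-plank $P_V$: because $P_V$ is a non-uniform thickening of its fan in general, one must exploit the symmetry of the orbit to see that its ``wings'' all have the same half-width and that this width, together with $r(V)$, is controlled by $\alpha_F$ alone. The remaining ingredients — that $F$ is a polyhedral fan, that the nearest Vorono\u{\i} site is always a Vorono\u{\i} neighbour, and the bookkeeping about the circumcenter of $O$ when $G$ is reducible — are routine, and are exactly the kind of geometry developed in Section~\ref{sec:voronoi}.
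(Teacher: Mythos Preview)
Your proposal is correct and follows the paper's own approach: the paper simply says the proof is ``literally the same'' as that of Theorem~\ref{thm:potato}, asserting without further justification that the $\overline r$-neighborhood of each cutter is a multi-plank of inradius $\overline r/\sin\alpha_F$. You fill in precisely this step, showing via the generating set $V=\lambda(O-c)$ that the $\mu$-neighborhood of $F$ is contained in a multi-plank of inradius $\lambda\rho\le\mu/\sin\alpha_F$, and you even track the possible discrepancy when the circumcenter $c$ of the orbit is not the origin; the remainder of your argument reproduces the paper's contradiction verbatim.
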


\begin{question}
Let $F$ be a sufficiently regular codimension 2 ``fan''. For instance, $F$ can be the union of the rays from the origin passing through the $(n-3)$-skeleton of the regular $n$-simplex, $n \ge 3$. The cuts are given by $N$ congruent copies of $F$ placed arbitrarily in $\mathbb{R}^n$. What is the largest radius of an open ball lying in the unit ball and avoiding the cuts?
\end{question}

We finish this section with a brief discussion of the optimality of the main theorem. Theorem~\ref{thm:pants} has many ``asymptotic equality cases'', different from trivial equality cases when $N=1$ or $k=1$. For example, the unit disc in the plane can be covered by two multi-planks of radius $r$ slightly greater than $1/2$, each generated by $N \gg 1$ points equidistributed along the circle of radius $r$. By picking $N$ sufficiently large one can get $r$ arbitrarily close to $1/2$. A similar example shows that Theorem~\ref{thm:potato} is asymptotically sharp for each fixed $m$ and $N \to \infty$. On the other hand, all those non-trivial asymptotic equality cases involve multi-planks that are not simple. Meanwhile, the proof of Theorem~\ref{thm:subadditivity} only exploits simple multi-planks. Given that, it would be interesting to know how sharp Theorem~\ref{thm:subadditivity} is when, say, $K$ is not centrally symmetric and $N > 1$.

\section{Multi-plank stratification}
\label{sec:voronoi}

This section is devoted to a complete description of how multi-planks actually look like. First, we show how multi-planks can be efficiently used for covering unions of conventional planks. This idea might be helpful for certain plank problems. Next, we introduce the language of \emph{anti-Vorono\u{\i} diagrams}, and their dual \emph{anti-Delaunay triangulations}. Recall Example~\ref{ex:pants2}: a simple multi-plank looks like an inflated fan, dividing $\mathbb{R}^n$ into unbounded convex regions, which form the so-called \emph{Vorono\u{\i} diagram} of $V$. An equivalent definition of a multi-plank, introduced in this section, tells us that this is always the case: associated to $V$, there is a nice subdivision of $\mathbb{R}^n$ into unbounded convex regions, such that its separating set can be thickened in order to get the multi-plank generated by $V$. The main result of this section, Theorem~\ref{thm:delaunay}, roughly speaking, explains how this thickening is done. We use it to justify the word ``inradius'' used in Definition~\ref{def:pants}. Remarks~\ref{rem:bezdek}~and~\ref{rem:polyanskii} discuss potential applications of the techniques of this section.

\begin{proposition}
\label{prop:coverplanks}
Let $V$ be the \emph{Bang set} of the family of planks
\[
P_i = \left\{x \in \mathbb{R}^n ~\middle\vert~ -|u_i|^2 < \langle x, u_i \rangle < |u_i|^2 \right\};
\]
that is, $V$ consists of all combinations $\sum\limits_i \pm u_i$, over all possible sign choices. Then the open centered multi-plank $P$ generated by $V$ contains the union $\bigcup\limits_i P_i$ (see Figure~\ref{fig:winged}).
\end{proposition}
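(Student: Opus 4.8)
The plan is to verify the inclusion $\bigcup_i P_i \subseteq P$ pointwise, directly from the definition of the open centered multi-plank. First I would note that the Bang set $V = \{\sum_i \eps_i u_i : \eps_i \in \{+1,-1\}\}$ is centrally symmetric, i.e.\ $V = -V$, so by the uniqueness of the smallest enclosing ball this ball is centered at the origin, and the centered multi-plank generated by $V$ is well-defined in the sense of Definition~\ref{def:pants}. (Example~\ref{ex:pants1} also identifies each $P_i$ as the rank-$1$ centered multi-plank generated by $\{u_i, -u_i\}$, which is why calling $V$ the ``Bang set'' of the $P_i$ is consistent with the terminology in the proof of Theorem~\ref{thm:pants}.)

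Next I would take an arbitrary point $x$ lying in some $P_{i_0}$; by definition of a plank this means $|\langle x, u_{i_0}\rangle| < |u_{i_0}|^2$, and in particular $u_{i_0} \neq 0$ (if $u_{i_0} = 0$ then $P_{i_0} = \emptyset$ and there is nothing to prove). To show $x \in P$ I must produce, for every $v^j \in V$, some $v^{j'} \in V$ with $|x| < |x - v^j + v^{j'}|$. The single idea of the proof is the choice of $v^{j'}$: writing $v^j = \sum_i \eps_i u_i$ with $\eps_i \in \{+1,-1\}$, I take for $v^{j'}$ the element of $V$ with the same signs except that the sign of the $u_{i_0}$-summand is reversed, so that $v^j - v^{j'} = 2\eps_{i_0} u_{i_0}$ and therefore $x - v^j + v^{j'} = x - 2\eps_{i_0}u_{i_0}$.

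It then remains to do the routine computation
\[
\bigl|x - 2\eps_{i_0}u_{i_0}\bigr|^2 - |x|^2 = 4\bigl(|u_{i_0}|^2 - \eps_{i_0}\langle x, u_{i_0}\rangle\bigr) > 0,
\]
where the final inequality holds because $\eps_{i_0}\langle x, u_{i_0}\rangle \le |\langle x, u_{i_0}\rangle| < |u_{i_0}|^2$. This gives the strict inequality $|x| < |x - v^j + v^{j'}|$ demanded by the definition of the open centered multi-plank, and since the argument works for every $j$ simultaneously, we conclude $x \in P$. As $x$ ranged over an arbitrary point of $\bigcup_i P_i$, the inclusion $\bigcup_i P_i \subseteq P$ follows. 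I do not anticipate any real obstacle here: the proof is very short once one thinks of flipping exactly one sign, and the only point needing a word of care is the degenerate case $u_{i_0} = 0$.
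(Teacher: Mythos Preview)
Your proof is correct and follows essentially the same approach as the paper: both arguments hinge on flipping exactly one sign in the Bang-set element and then expanding $|x - 2\eps_{i_0}u_{i_0}|^2 - |x|^2$. The only cosmetic difference is that the paper argues by contraposition (if $x \notin P$ then $x \notin P_i$ for every $i$), whereas you argue directly; your added remarks that $V=-V$ forces the smallest enclosing ball to be centered at the origin, and that $u_{i_0}=0$ is harmless, are welcome clarifications.
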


\begin{proof}
Assume $x \notin P$; then for a certain $v^j = \sum\limits_i \eps_i u_i$, $\eps_i \in \{+1, -1\}$, we have
\[
\left|x\right| \ge \left|(x - v^j) + \sum_i \eps_i' u_i\right|, \quad \mbox{for all } \eps_i' \in \{+1, -1\}.
\]
Therefore,
\[
\left|x\right|^2 \ge \left|x + \sum_i (\eps_i' - \eps_i) u_i\right|^2, \quad \mbox{for all } \eps_i' \in \{+1, -1\}.
\]
Set all $\eps_i'$ equal to the corresponding $\eps_i$ except one; then we get
\[
\left|x\right|^2 \ge \left|x - 2\eps_i u_i\right|^2 = \left|x\right|^2 - 4\eps_i \langle x, u_i \rangle + 4|u_i|^2, \quad \mbox{for all } i.
\]
This last line implies that $x \notin P_i$, for each $i$; hence, $\bigcup\limits_i P_i \subseteq P$.
\end{proof}

\begin{figure}[ht]
\centering
\includegraphics[width=0.75\textwidth]{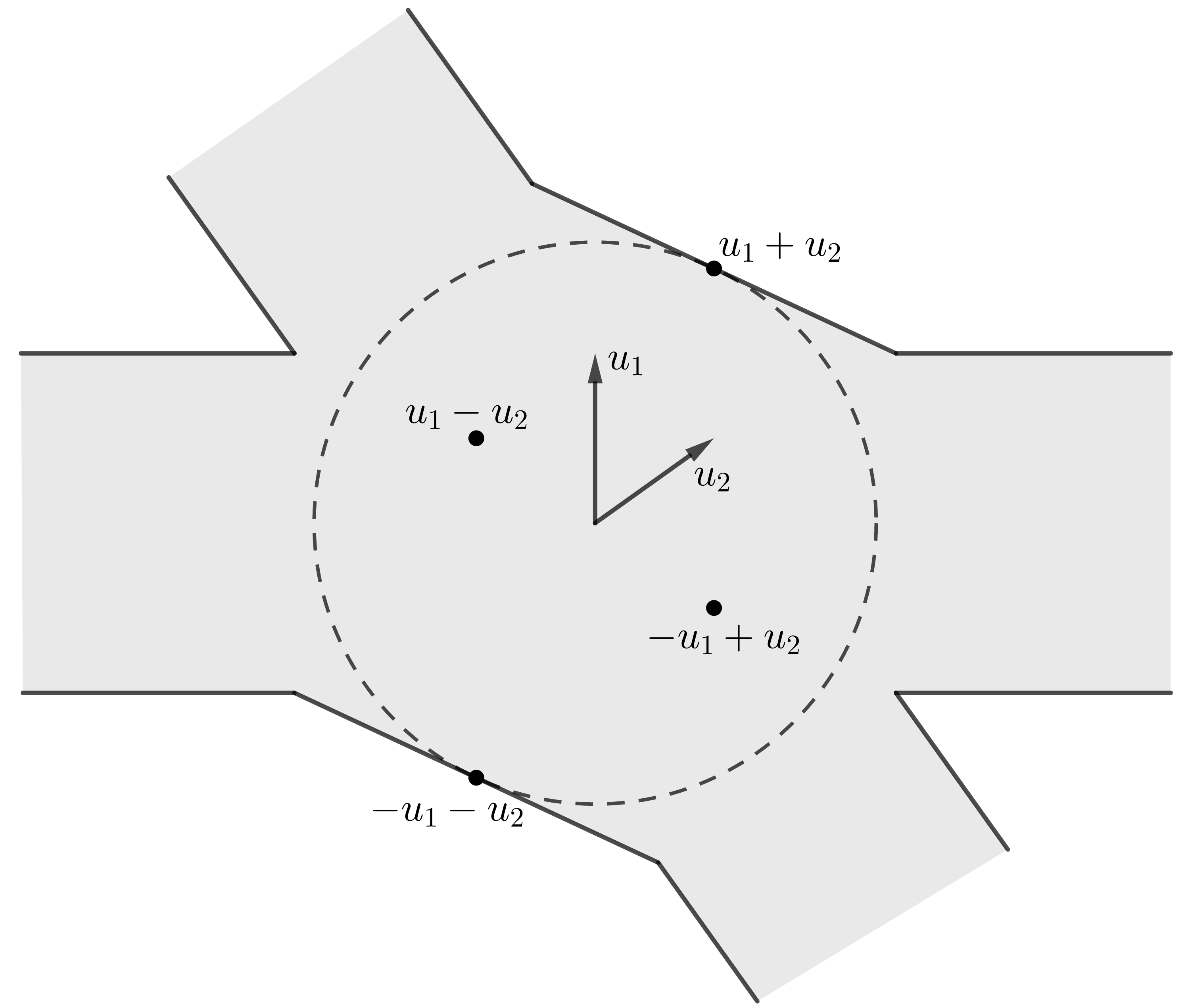}
\caption{A multi-plank covering two planks}
\label{fig:winged}
\end{figure}

\begin{remark}
\label{rem:bezdek}
Notice that two non-parallel planks, as in Figure~\ref{fig:winged}, have total half-width $|u_1| + |u_2|$, but their union is covered (even strictly covered unless $\langle u_1, u_2 \rangle = 0$) by a multi-plank of inradius $\max(|u_1+u_2|, |u_1-u_2|)$, which is strictly less than $|u_1| + |u_2|$. Informally speaking, this gives a more ``economical'' way of measuring ``total width'' in the following sense. Assume a symmetric convex set $K$ (for instance, the unit ball) is covered by planks; then some non-parallel pairs (or concurrent triples, etc.) are replaced by larger multi-planks, as in Proposition~\ref{prop:coverplanks}. For the resulting covering, the inequality given by Theorem~\ref{thm:pants} is stronger than the one given by Theorem~\ref{thm:bang} for the original covering. It is interesting whether this approach can give anything non-trivial for any of the open plank problems, e.g., Andr{\'a}s Bezdek's conjecture~\cite{bezdek2003covering} on covering an annulus: if the unit disk with a small puncture is covered by planks, then their total width is at least $2$.
\end{remark}

Now we introduce an equivalent way to define a multi-plank using the language of Vorono\u{\i} diagrams.

\begin{definition}
\label{def:voronoi}
Given a set $V = \{v^{1}, \ldots, v^{m}\}$ of points in $\mathbb{R}^n$, the \emph{anti-Vorono\u{\i} diagram} (or the \emph{farthest-point Vorono\u{\i} diagram}) is the partition $\mathbb{R}^n = \bigcup\limits_{j \in [m]} A_V^{j}$, where the closed \emph{cells} $A_V^{1}, \ldots, A_V^{m}$ are given by
\[
A_V^{j} = \left\{x \in \mathbb{R}^n ~\middle\vert~ |x-v^{j}| \ge |x-v^{j'}| ~\forall j' \in [m]\right\}.
\]
In other words, $A_V^{j}$ consists of all points for which the farthest element of $V$ is $v^{j}$.
\end{definition}

One should notice that all regions $A_V^{j}$ are convex. Additionally, each cell $A_V^{j}$ is either unbounded (if $v^{j}$ is an extreme point of $\conv V$) or empty (otherwise). The unboundedness of non-empty cells follows from the following simple claim: If $x \in A_V^{j}$ then the entire ray $\{x + t(x-v^{j}) ~\vert~ t \ge 0\}$ lies in $A_V^{j}$.
In the sequel we use the notation $A_{-V}^{j}$ for the anti-Vorono\u{\i} cell of the set $-V = \{-v^{1}, \ldots, -v^{m}\}$ corresponding to the point $-v^j$; that is,
\[
A_{-V}^{j} = \left\{x \in \mathbb{R}^n ~\middle\vert~ |x+v^{j}| \ge |x+v^{j'}| ~\forall j' \in [m]\right\}.
\]

We are ready to rephrase Definition~\ref{def:pants}. Let $V = \{v^{1}, \ldots, v^{m}\}$, $m \ge 2$, be a set of points in $\mathbb{R}^n$, such that the smallest ball containing $V$ is centered at the origin. Then the set
  \[
  P = \mathbb{R}^n \setminus \bigcup\limits_{j \in [m]} \left( v^{j} + A_{-V}^{j} \right)
  \]
is precisely the open centered multi-plank generated by $V$.

\begin{definition}
\label{def:delaunay}
Given a set $V = \{v^{1}, \ldots, v^{m}\}$ of points in $\mathbb{R}^n$, whose affine hull is the entire $\mathbb{R}^n$, an \emph{anti-Delaunay triangulation} (or a \emph{farthest-point Delaunay triangulation}) is a triangulation of $\conv V$ satisfying the \emph{full sphere property}: for each simplex of the triangulation, the (closed) ball whose boundary passes through the simplex vertices contains the entire $V$.
\end{definition}

It is known that an anti-Delaunay triangulation always exists (see, e.g.,~\cite[Section~4]{akopyan2011extremal}), and is unique provided that no $n+2$ points lie on a sphere. In the case when the affine hull of $V$ is smaller than $\mathbb{R}^n$, one can define an anti-Delaunay triangulation inside the affine hull of $V$.

Now we give a finer description what a multi-plank looks like. We use the notation $N_T(x)$ for the cone of outer normals of a convex body $T$ at a boundary point $x \in \partial T$; by definition, $N_T(x) = \{\nu \in \mathbb{R}^n ~\vert~ \langle \nu, y-x \rangle \le 0 ~\forall y \in T\}$. If $F$ is a face of $T$, we write $N_T(F)$ for the cone of outer normals at any point from the relative interior of $F$.

Let $V = \{v^{1}, \ldots, v^{m}\} \subset \mathbb{R}^n$ be the generating set of an open centered multi-plank $P$. If the rank of $P$ is smaller than $n$, the multi-plank looks like the orthogonal product of a subspace and a lower-dimensional multi-plank. For this reason, we restrict our attention to full-rank multi-planks for now.

Consider an anti-Delaunay triangulation $\Sigma$ of $\conv V$ regarded as a simplicial complex. For each top-dimensional cell $\sigma$ of $\Sigma$, let $S_\sigma$ be the translated copy of $\sigma$ such that the origin is equidistant from the vertices of $S_\sigma$. The simplices $S_\sigma$ do not overlap: this follows from the full sphere property of $\Sigma$. Indeed, if $\sigma_1$ and $\sigma_2$ are two anti-Delaunay cells, they need to be pushed apart in order to make their circumspheres concentric. (Here and below by ``circumsphere'' we mean the sphere passing through all the vertices of a simplex, and ``circumradius'' refers to its radius. Note that this is not standard.)

% and let $\sigma_1, \ldots, \sigma_M$ be its full-dimensional simplices. For each $1 \le i \le M$, let $S_i = S(\sigma)$ be the translated copy of $\sigma_i$ such that the origin is equidistant from the vertices of $S_i$. The simplices $S_i$ do not overlap---this follows from the empty sphere property of $\Sigma$.

Let $\tau$ be a cell in $\Sigma$ of dimension greater that $0$. For each top-dimensional cell $\sigma$ containing $\tau$, find the corresponding face $T_{\tau, \sigma}$ of $S_\sigma$ (the one that is a translated copy of $\tau$). In particular, $T_{\sigma, \sigma} = S_\sigma$. Consider the following set:
\begin{equation}\label{eq:stratum}
P_\tau = \bigcap_{\sigma \supset \tau} \left( \rint T_{\tau, \sigma} + N_{S_{\sigma}}(T_{\tau, \sigma}) \right), \tag{$\star$}
\end{equation}
where the intersection is taken over all top-dimensional cells $\sigma$ containing $\tau$. For top-dimensional cells, this definition gives
\[
P_\sigma = \inte S_\sigma.
\]
We prove that the multi-plank $P$ can be decomposed into strata $P_\tau$.
\begin{theorem}
\label{thm:delaunay}
Let $P$ be a multi-plank of full rank generated by $V \subset \mathbb{R}^n$. Let $\Sigma$ be an anti-Delaunay triangulation of $V$, viewed as a simplicial complex. For each top-dimensional cell $\sigma$ of $\Sigma$, let $S_\sigma$ be the translated copy of $\sigma$ such that the origin is equidistant from the vertices of $S_\sigma$. For each pair of cells $\tau \subset \sigma$ with $0 < \dim \tau \le \dim \sigma = n$, let $T_{\tau, \sigma}$ be the face of $S_\sigma$ that is a translated copy of $\tau$. Let $P_\tau$ be defined as in~\eqref{eq:stratum}.

With this notation, the multi-plank $P$ admits the following stratification:
\[
P = \bigcup_{d=1}^n \bigcup_{\dim \tau = d} P_\tau,
\]
where the inner union is taken over all cells of $\Sigma$ of dimension $d$.
\end{theorem}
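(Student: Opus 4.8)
The plan is to prove the two inclusions $P \subseteq \bigcup_{\tau} P_\tau$ and $\bigcup_\tau P_\tau \subseteq P$ separately, using the reformulation of the multi-plank via anti-Vorono\u{\i} cells, namely $P = \mathbb{R}^n \setminus \bigcup_{j} (v^j + A_{-V}^j)$. The geometric content is that each stratum $P_\tau$ is the ``piece'' of $P$ lying over the face $\tau$ of the anti-Delaunay complex, and that the translated simplices $S_\sigma$, together with the normal fans along their faces, tile $\mathbb{R}^n$ in the same combinatorial pattern as $\Sigma$ tiles $\conv V$.

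First I would establish the tiling statement: the sets $S_\sigma + $ (something) fit together to cover $\mathbb{R}^n$ with the overlaps being exactly the lower strata. Concretely, for each top cell $\sigma$ the closed region $\bar S_\sigma$ has its circumsphere centered at the origin with radius $r(V)$ (after the translation defining $S_\sigma$), and a point $x \in \mathbb{R}^n$ belongs to $x \in P$ iff for every $j \in [m]$, $x$ is \emph{not} in $v^j + A_{-V}^j$, i.e.\ $|x - v^j| < |x - v^j + v^j - v^{j'}|$ is violated in the complement sense. I would rewrite membership in $P$ as: for all $j$ there exists $j'$ with $|x| < |x - v^j + v^{j'}|$, expand the squares as in the proof of Lemma~\ref{lem:simple}, and read this off as a statement about which anti-Vorono\u{\i} cell of $-V$ (shifted) the point $x - v^j$ lies in. The key bookkeeping is that $x \notin P$ exactly when there is a vertex $v^j$ such that $x - v^j$ lies in the anti-Vorono\u{\i} region $A_{-V}^j$, which (after recentering circumspheres) translates into $x$ lying in the ``normal cone side'' of the simplex $S_\sigma$ for some $\sigma$ containing $v^j$, but on the \emph{wrong} side of every face through $v^j$.

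For the inclusion $\bigcup_\tau P_\tau \subseteq P$: take $x \in P_\tau$, so for every top cell $\sigma \supseteq \tau$ we have $x \in \rint T_{\tau,\sigma} + N_{S_\sigma}(T_{\tau,\sigma})$. Pick any vertex $v^j$; I want to show $x - v^j \notin A_{-V}^j$ (so that, ranging over $j$, $x$ avoids all the forbidden regions, hence lies in $P$). If $v^j$ is not a vertex of $\tau$, then $v^j$ is a vertex of some $\sigma \supseteq \tau$, and the normal-cone condition at the face $T_{\tau,\sigma}$ (which does not contain the vertex corresponding to $v^j$) forces, via the expansion of $|x-v^j+v^{j'}|^2 - |x|^2$, the strict inequality witnessing $x-v^j \notin A_{-V}^j$. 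If $v^j$ \emph{is} a vertex of $\tau$, one uses instead that $x$ lies on the appropriate side relative to the other vertices of $\tau$. The reverse inclusion $P \subseteq \bigcup_\tau P_\tau$ is then a matter of showing the strata cover: given $x \in P$, among all the simplices $S_\sigma$ and their face-plus-normal-cone regions, the point $x$ lands in exactly one $\rint T_{\tau,\sigma} + N_{S_\sigma}(T_{\tau,\sigma})$ per $\sigma$ (a standard fact: the relative interiors of faces of a polytope, each thickened by its normal cone, partition $\mathbb{R}^n$), and the full sphere property guarantees these choices are consistent across different $\sigma$, pinning down a single face $\tau$ of $\Sigma$ with $x \in P_\tau$.

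The main obstacle I anticipate is the consistency/compatibility argument across different top-dimensional cells $\sigma_1, \sigma_2$ sharing the face $\tau$: one must verify that the translations defining $S_{\sigma_1}$ and $S_{\sigma_2}$ (each making the circumsphere centered at the origin) are compatible along $\tau$ in the sense that the normal cones $N_{S_{\sigma_1}}(T_{\tau,\sigma_1})$ and $N_{S_{\sigma_2}}(T_{\tau,\sigma_2})$ overlap in the ``right'' way to make $P_\tau$ nonempty and to make the union telescope correctly. This is exactly where the full sphere property of the anti-Delaunay triangulation enters — it is the analogue of the empty-sphere (local Delaunay) condition that makes the ordinary Delaunay complex dual to the Vorono\u{\i} diagram — and carefully translating that condition into the statement ``the $S_\sigma$ do not overlap and their normal-cone thickenings along $\tau$ intersect in a single cone'' is the crux. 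Once that combinatorial-geometric lemma is in hand, both inclusions follow by the square-expansion computations already rehearsed in Lemma~\ref{lem:simple} and Proposition~\ref{prop:coverplanks}.
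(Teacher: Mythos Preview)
Your two-inclusion strategy is reasonable in spirit, but the argument for $\bigcup_\tau P_\tau \subseteq P$ has a real gap. In your case split you assert that if $v^j$ is not a vertex of $\tau$ then $v^j$ is a vertex of some top cell $\sigma \supseteq \tau$. This is false: a vertex $v^j$ of $\Sigma$ need not lie in the star of $\tau$ at all (already in the plane, with five points in convex position, an interior edge has only four vertices in its star). For such a remote $v^j$ the normal-cone conditions at the faces $T_{\tau,\sigma}$, $\sigma \supseteq \tau$, give no direct inequality involving $v^j$, so your square-expansion does not conclude $x \notin v^j + A_{-V}^j$. Patching this would require chaining through adjacent top cells from $\tau$ to $v^j$, which is precisely the ``consistency'' obstacle you flag and do not resolve.

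The paper avoids both this gap and the consistency issue with a single device you are missing: it \emph{extends} the stratum definition to $0$-cells and shows directly that $P_{v^j} = v^j + A_{-V}^j$ (by comparing the half-space systems, using the anti-Delaunay/anti-Vorono\u{\i} duality that only vertices adjacent to $v^j$ in $\Sigma$ contribute). Then it proves that the strata $P_\tau$ over \emph{all} faces of $\Sigma$, including vertices, partition $\mathbb{R}^n$, via a one-line nearest-point argument: for any $x$, the closest point $y \in \bigcup_\sigma S_\sigma$ lies in some $\rint T_{\tau,\sigma}$, and then $x \in P_\tau$. Since $P$ is by definition the complement of $\bigcup_j (v^j + A_{-V}^j) = \bigcup_j P_{v^j}$, the desired equality $P = \bigcup_{\dim \tau \ge 1} P_\tau$ follows at once. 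This makes the cross-$\sigma$ compatibility automatic (it uses only that the $S_\sigma$ do not overlap, already noted from the full sphere property) and eliminates any case analysis over remote vertices.
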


Note that we do not assume any genericity of $V$, except for its affine rank; but in the special case when no $n+2$ points of $V$ lie on the same sphere, this description tells us that locally, near each simplex $S_\sigma = \conv \{u^0, \ldots, u^n\}$, the multi-plank $P$ looks like $\mathbb{R}^n \setminus \bigcup\limits_{j = 0}^n (u^j + N_{S_\sigma}(u^j))$ (see Figure~\ref{fig:strata}).

\begin{figure}[ht]
\centering
\includegraphics[width=0.85\textwidth]{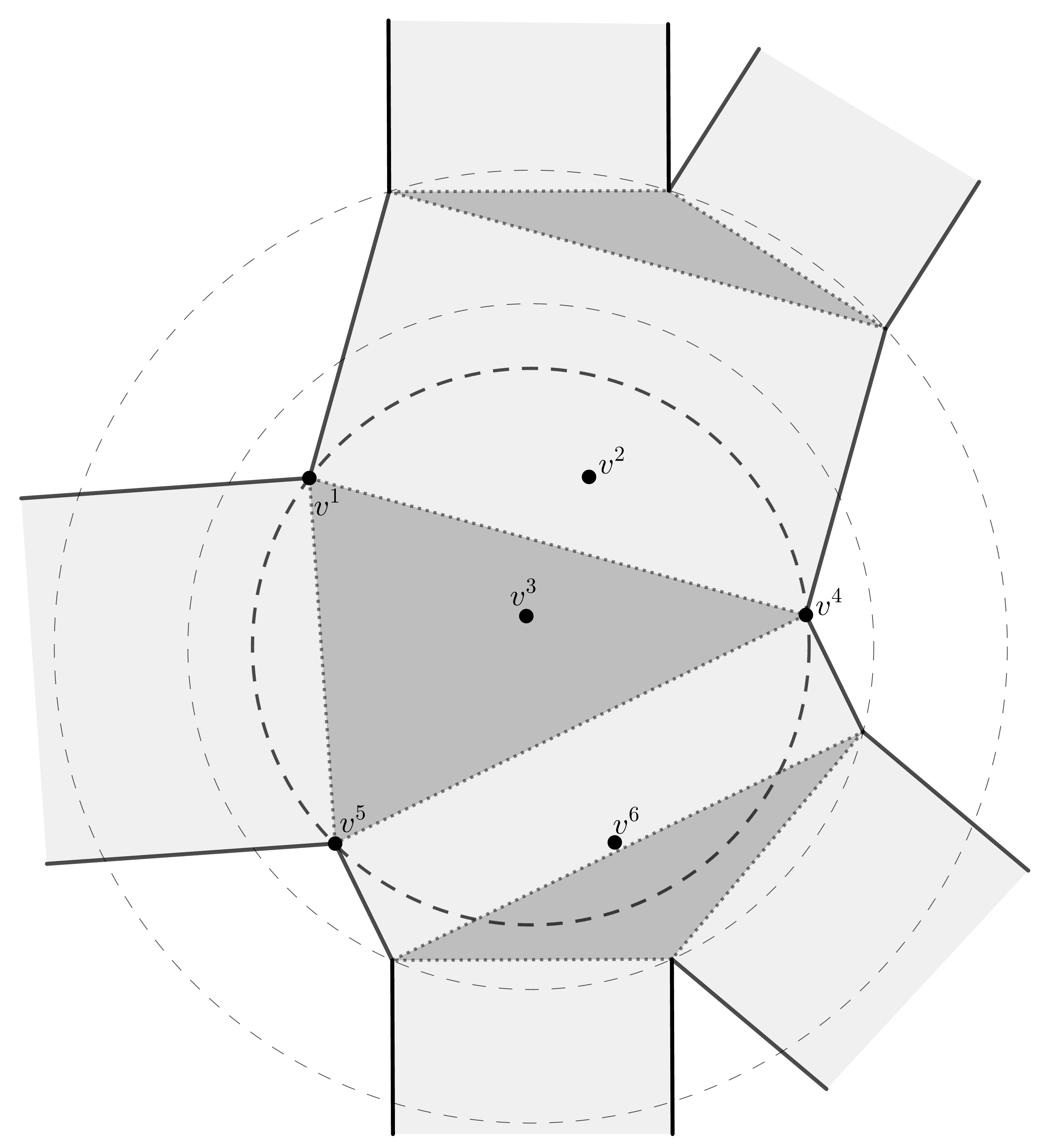}
\caption{The stratification of a multi-plank}
\label{fig:strata}
\end{figure}

\begin{proof}[Proof of Theorem~\ref{thm:delaunay}]
To begin with, we extend the definition of $P_\tau$ to the case when $\tau = v^j$ is a vertex in $\Sigma$. As before, for each top-dimensional cell $\sigma$ containing $v^j$, find the corresponding vertex $T_{v^j, \sigma}$ of $S_\sigma$. The stratum corresponding to $v^j$ is defined as
\[
P_{v^j} = \bigcap_{\sigma \ni v^j} \left( T_{v^j, \sigma} + N_{S_{\sigma}}(T_{v^j, \sigma}) \right),
\]
where the intersection is taken over all top-dimensional cells $\sigma$ containing $v^j$. We claim that $P_{v^j}$ is nothing else as $v^j + A_{-V}^j$, the shifted anti-Vorono\u{\i} cell from the alternative definition of a multi-plank.
This is somewhat tedious but straightforward. The set $v^j + A_{-V}^j$ is defined by the system of inequalities $|x| \ge |x-v^j+v^{j'}|$, over $v^{j'} \in V$. In fact, only the vertices $v^{j'}$ adjacent to $v^j$ in $\Sigma$ contribute to this system; this is essentially the duality between the anti-Delaunay triangulation and the anti-Vorono\u{\i} diagram. Equivalently, one can write those inequalities as

\begin{equation}\label{eq:strata1}
\left\langle x + \frac{v^{j'} - v^j}{2}, v^{j'} - v^j \right\rangle \le 0, \quad v^{j'} \mbox{ adjacent to } v^j. \tag{$\spadesuit$}
\end{equation}

On the other hand, each set $T_{v^j, \sigma} + N_{S_{\sigma}}(T_{v^j, \sigma})$ is defined by inequalities of the form
\begin{equation}\label{eq:strata2}
\left\langle x - T_{v^j, \sigma}, T_{v^{j'}, \sigma} - T_{v^j, \sigma} \right\rangle \le 0, \quad v^{j'} \in \sigma. \tag{$\heartsuit$}
\end{equation}

Varying $\sigma \ni v^j$ here, one gets those inequalities for all $v^{j'}$ adjacent to $v^j$ in $\Sigma$.

Observe that $v^{j'} - v^j = T_{v^{j'}, \sigma} - T_{v^j, \sigma}$, if $v^j$ and $v^{j'}$ form an edge in $\sigma$. Next,
\[
T_{v^j, \sigma} = \underbrace{\frac{T_{v^j, \sigma} + T_{v^{j'}, \sigma}}{2}}_{\text{orthogonal to } v^{j'} - v^j} + \underbrace{\frac{T_{v^j, \sigma} - T_{v^{j'}, \sigma}}{2}}_{= \frac{v^{j} - v^{j'}}{2}}.
\]
Therefore, $\frac{v^{j} - v^{j'}}{2}$ and $T_{v^j, \sigma}$ lie in the same hyperplane orthogonal to $v^{j'} - v^j$; this proves that inequalities~\eqref{eq:strata1}~and~\eqref{eq:strata2} are equivalent, so $P_{v^j} = v^j + A_{-V}^j$.

To finish the proof, it suffices to observe that $\mathbb{R}^n$ is the disjoint union of the strata $P_\tau$, over faces $\tau$ in $\Sigma$ of any dimension. Indeed, for any point $x \in \mathbb{R}^n$ we can consider the nearest to $x$ point $y \in \bigcup\limits_{\sigma} S_\sigma$ (the union is over top-dimensional cells of $\Sigma$). If $y \in \rint T(\tau, \sigma)$ then it is easy to see that $x \in P_\tau$ (and that $P_\tau$ is the only stratum containing $x$).
\end{proof}

\begin{remark}
\label{rem:delaunay}
Theorem~\ref{thm:delaunay} remains true in the case when the rank of $P \subset \mathbb{R}^n$ is less than $n$. In this case, the anti-Delaunay triangulation of $V$ should be considered inside the affine hull $L$ of $V$, and in the definition of stratum~\eqref{eq:stratum} the normal cone $N_{S_{\sigma}}(T_{\tau, \sigma})$ gets decomposed as the Minkowski sum of the normal cone in $L$ with $L^\bot$.
\end{remark}

\begin{remark}
\label{rem:polyanskii}
If $P$ is a centered rank $k$ multi-plank generated by $V$, the stratification of $P$ is defined using the $k$-dimensional anti-Delaunay triangulation of $V$. Let $\rho$ be the smallest circumradius of a top-dimensional cell of that triangulation. Clearly, $\rho \ge r(V)$, and it might happen that $\rho > r(V)$, if not all vertices of $\conv V$ lie on the sphere of radius $r(V)$. A direct corollary of Theorem~\ref{thm:delaunay} is that inside the ball $B_\rho$ of radius $\rho$ the multi-plank $P$ can be simplified; namely, $P \cap B_\rho = P' \cap B_\rho$, where the multi-plank $P'$ is generated by the subset of $V$ consisting of vectors of length $r(V)$. It would be interesting to know whether the proof of Jiang and Polyanskii~\cite{jiang2017proof} of L.~Fejes~T\'oth's zone conjecture can be retold using this trick in the language of multi-planks.\footnote{While this paper was under review, Polyanskii released a preprint~\cite{polyanskii2020cap} proving even stronger version of L.~Fejes~T\'oth's zone conjecture. The proof is partially inspired by certain multi-plank-related intuition, but there are other crucial ideas as well.}
\end{remark}

We use Theorem~\ref{thm:delaunay} to justify the word ``inradius'' used in Definition~\ref{def:pants}. Notice that here we refer to the intrinsic inradii of a possibly non-convex set; these are defined exactly as in Definition~\ref{def:inrad}.

\begin{lemma}
\label{lem:inradius}
Let $P$ be an open multi-plank of rank $k$ generated by $V \subset \mathbb{R}^n$. The radius $r(V)$ (as in Definition~\ref{def:pants}) is indeed the inradius of $P$;
moreover, the upper intrinsic inradii $r^{(k)}(P), \ldots, r^{(n)}(P)$, and the lower intrinsic radii $r_{(k)}(P), \ldots, r_{(n)}(P)$ all equal $r(V)$.
\end{lemma}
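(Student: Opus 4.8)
The plan is to reduce everything to one core claim: \emph{for every open multi-plank $Q$ generated by a finite set $W$ one has $r(Q)=r(W)$}, where $r(Q)$ denotes the ordinary inradius. Having this, the whole lemma falls out together with elementary monotonicity of the successive radii. Throughout I may assume the multi-plank is centered, since $r(V)$ and all the intrinsic radii are translation-invariant; the only tool needed is the anti-Vorono\u{\i} rephrasing $\mathbb{R}^n\setminus Q=\bigcup_j\bigl(w^j+A_{-W}^j\bigr)$ recorded just before Definition~\ref{def:voronoi}.

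For $r(Q)\le r(W)$: the cells $A_{-W}^j$ partition $\mathbb{R}^n$, so an arbitrary $c$ lies in some $A_{-W}^{j^*}$, whence $c+w^{j^*}\in w^{j^*}+A_{-W}^{j^*}\subseteq\mathbb{R}^n\setminus Q$ and $\dist(c,\mathbb{R}^n\setminus Q)\le|w^{j^*}|\le r(W)$; as $c$ is arbitrary, no ball of radius exceeding $r(W)$ fits in $Q$. For $r(Q)\ge r(W)$ I would show that the open ball of radius $r(W)$ centered at the origin lies in $Q$: if it met $w^j+A_{-W}^j$ at a point $z=w^j+y$ with $y\in A_{-W}^j$, then $|y+w^j|=|z|<r(W)$ and, by the defining inequalities of $A_{-W}^j$, $|y+w^{j'}|\le|y+w^j|<r(W)$ for all $j'$, so $W$ would lie in a closed ball of radius strictly less than $r(W)$ about $-y$, contradicting the minimality of its smallest enclosing ball. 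This proves the core claim.

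To deduce the lemma, first apply the claim to $P$ itself: $r(P)=r^{(n)}(P)=r_{(n)}(P)=r(V)$, which already justifies the word ``inradius''. For the higher indices, center $P$ and put $L=\lin V$, a subspace of dimension $k$. Since $|x-v^j+v^{j'}|^2-|x|^2$ depends on $x$ only through $\langle x,v^{j'}-v^j\rangle$ with $v^{j'}-v^j\in L$, membership of $x$ in $P$ depends only on $x\vert L$; hence $P=(P\cap L)+L^\bot$ and $P\vert L=P\cap L$ is exactly the full-rank multi-plank generated by $V$ inside $L$ (whose smallest enclosing ball, measured in $L$, is still centered at the origin with radius $r(V)$). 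Applying the core claim inside $L$ gives $r(P\vert L;L)=r(V)$, so $r^{(k)}(P)\le r(P\vert L;L)=r(V)$. Combining this with the routine monotonicity $r^{(i)}(S)\ge r^{(j)}(S)$ and $r_{(i)}(S)\ge r_{(j)}(S)$ for $i\le j$ (obtained by enlarging the competing subspace, resp.\ ball, in the definitions) and the inequality $r^{(j)}(S)\ge r_{(j)}(S)$, we get for $k\le j\le n$
\[
r(V)=r_{(n)}(P)\le r_{(j)}(P)\le r^{(j)}(P)\le r^{(k)}(P)\le r(V),
\]
so every term equals $r(V)$.

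The main obstacle is really just pinning down the two halves of the core claim with the right constant: the ``$\le$'' direction is the observation that translating the cell $A_{-W}^j$ by $w^j$ moves it by at most $r(W)$, and the ``$\ge$'' direction is the optimality property of smallest enclosing balls. The points needing care are that the upper bound must be produced at exactly level $k$ via $L=\lin V$ (for $j<k$ the successive radii can genuinely exceed $r(V)$), and that the anti-Vorono\u{\i} rephrasing applies to centered multi-planks, which is why the translation reduction comes first.
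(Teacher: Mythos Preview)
Your argument is correct. The lower bound $B_{r(V)}\subset P$ is essentially the paper's own computation, just expressed in the anti-Vorono\u{\i} language. The monotonicity chain and the cylinder decomposition $P=(P\cap L)+L^\bot$ are also the same in spirit as what the paper records.

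Where you genuinely diverge is in the upper bound $r(P)\le r(V)$. The paper invokes the stratification Theorem~\ref{thm:delaunay}: it locates the center $c$ of a largest inscribed ball in a stratum $P_\tau$, and then uses the structure of that stratum (translates of anti-Delaunay cells, normal cones) to exhibit nearby points of $\mathbb{R}^n\setminus P$ and conclude $\rho\le r(V)$. Your route is far more elementary: from the covering $\mathbb{R}^n=\bigcup_j A_{-V}^j$ you read off, for \emph{every} $c$, a point $c+v^{j^*}\notin P$ at distance $|v^{j^*}|\le r(V)$, so $\dist(c,\mathbb{R}^n\setminus P)\le r(V)$ uniformly in $c$. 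This bypasses the whole anti-Delaunay machinery and in fact gives a bit more: it shows that \emph{every} center, not just the optimal one, is within $r(V)$ of the complement. The trade-off is that the paper's approach illustrates why Theorem~\ref{thm:delaunay} was developed, while yours shows the lemma does not actually depend on it.
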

\begin{proof}
We can assume that $P$ is centered. First we show that the open ball $B_r$ of radius $r = r(V)$ is contained in $P$.

Suppose $x \notin P$, then $x \in v^{j} + A_{-V}^{j}$ for some $j$. It means that the farthest from $x - v^j$ element of $-V$ is $-v^j$, that is,
\[
\left|(x - v^j) + v^j\right| \ge \left|(x - v^j) + v^{j'} \right|, \quad \mbox{for all } v^{j'} \in V.
\]
Therefore,
\[
\left|x\right|^2 \ge \left|x - (v^j - v^{j'}) \right|^2 = \left| x - v^j \right|^2 + 2\left\langle x - v^j, v^{j'} \right\rangle + \left|v^{j'}\right|^2.
\]
It is possible to pick $v^{j'} \in \partial B_r$ such that $\langle x - v^j, v^{j'} \rangle \ge 0$, since $\overline B_r$ is the smallest ball containing $V$. For such a choice of $v^{j'}$ one gets
\[
\left|x\right|^2 \ge \left|v^{j'}\right|^2 = r^2,
\]
thus proving that $x \notin B_r$.

We have shown that $r_{(n)}(P) \ge r(V)$. Now we need to show that $r^{(k)}(P) \le r(V)$. In fact, it suffices to show that $r^{(n)}(P) \le r(V)$, since a rank $k$ multi-plank is the Minkowski sum of a $k$-dimensional multi-plank with the orthogonal subspace.

Let $c + B_\rho$ be the largest open ball contained in $P$, $\rho = r^{(n)}(P)$. Let its center belong to the stratum $P_\tau$ of $P$, where $\tau = \conv \{u^0, \ldots, u^d\}$ is a $d$-dimensional cell of the anti-Delaunay triangulation of $\conv V$. The stratum $P_\tau$ can be represented as $P_\tau = s + \rint \tau + R$, where $s \in \mathbb{R}^n$ is a translation vector, and $R$ is a certain closed set of dimension $n - d$, orthogonal to $\tau$. From the stratification result, Theorem~\ref{thm:delaunay}, one can deduce that the sets $s + u^j + R$ are all disjoint from $P$. Hence, the radius $\rho$ does not exceed the shortest among the distances $\dist(c, s + u^j + R) = |\pi_\tau(c-s) - u_j|$, where $\pi_\tau(c-s) \in \rint \tau$ is the orthogonal projection of $c-s$ onto the affine hull of $\tau$. If $\rho > r = r(V)$, then the vertices of $\tau$ are all in $\overline B_r \setminus (\pi_\tau(c-s) + B_{\rho})$, which can be strictly separated from $\pi_\tau(c-s)$ by a hyperplane; this contradicts the fact $\pi_\tau(c-s) \in \rint \tau$. Therefore, $\rho = r^{(n)}(P) \le r(V)$.
\end{proof}

\section{Multi-planks in normed spaces}
\label{sec:normed}

The scheme of the proof of Theorem~\ref{thm:pants} can be repeated to an extent in the setting of a normed space. Let $\mathbb{R}^n$ be endowed with a (possibly, asymmetric) norm $\|\cdot\|$ whose open unit ball is $B$, an open bounded convex set containing the origin:
\[
\|x\| = \inf \{r ~\vert~ x \in r B\}.
\]
We do not require $B$ to be centrally symmetric, so in general $\|x\| \neq \lVert -x \rVert$ (but the triangle inequality holds).

\begin{definition}
\label{def:normedinrad}
Let $K$ be a convex set in an asymmetric normed space $\mathbb{R}^n$ with the unit ball $B$. Let $1 \le k \le n$.
\begin{enumerate}
  \item The \emph{upper intrinsic inradius} $r^{(k)}_B(K)$ is defined as the largest number $r$ such that, for any codimension $k$ subspace $N$, the homothet $rB$ can be translated into $K+N$.
  \item The \emph{lower intrinsic inradius} $r_{(k)}^B(K)$ is defined as the largest number $r$ such that any $k$-dimensional section (passing through the origin) of $rB$ can be translated into $K$.
\end{enumerate}
\end{definition}

\begin{definition}
\label{def:normedpants}
Let $V = \{v^{1}, \ldots, v^{m}\}$, $m \ge 2$, be a set of points in $\mathbb{R}^n$, such that $\|v^j\| \le r$ for all $j$, and $V$ cannot be covered by a homothet of $B$ smaller than $rB$. Define the \emph{anti-Vorono\u{\i} cells} as
\[
A_{-V}^{j} = \left\{x \in \mathbb{R}^n ~\middle\vert~ \|x+v^{j}\| \ge \|x+v^{j'}\| ~\forall j' \in [m]\right\},
\]
and the \emph{open centered multi-plank} generated by $V$ as
\[
P = \mathbb{R}^n \setminus \bigcup\limits_{j \in [m]} \left(v^{j} + A_{-V}^{j} \right).
\]

The number $r = r^B(P)$ is called the \emph{inradius} of $P$, and the dimension of the convex hull of $V$ is called the \emph{rank} of $P$.
\end{definition}

We remark that the cells $A_{-V}^{j}$ are no longer convex. See Figure~\ref{fig:normed} for an example of a rank 1 plank in an asymmetric norm. In this figure, the unit ball of the norm is depicted in the middle (the origin is marked with a `+' sign), the generating set is $V = \{v^1, v^2\}$.

\begin{figure}[ht]
\centering
\includegraphics[width=0.85\textwidth]{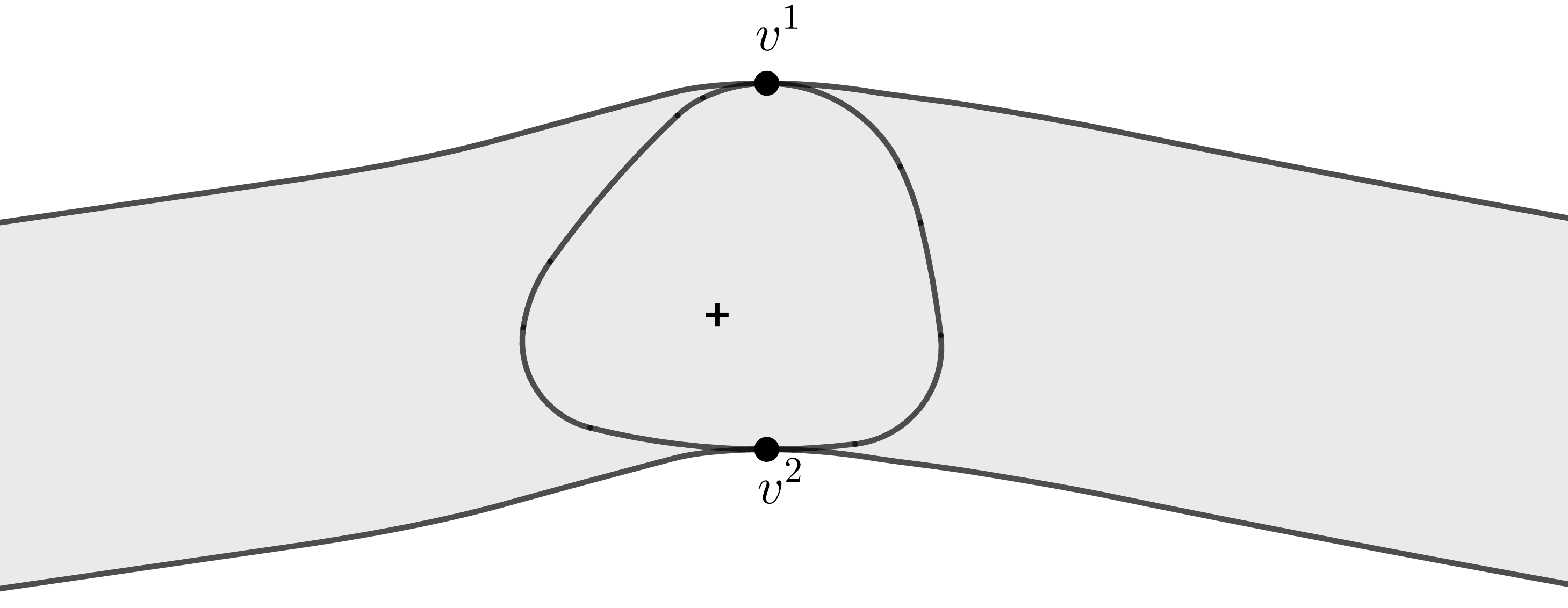}
\caption{A multi-plank in a normed plane}
\label{fig:normed}
\end{figure}

The proof of step 1 in Theorem~\ref{thm:pants} falls through in the normed case. It is no longer true that a shifted multi-plank looks similar to the section of a higher-dimensional multi-plank; this is the reason why we have to consider only centered multi-planks. The proof of step 2 is still valid, though, which gives us the following result.

\begin{theorem}
\label{thm:normedpants}
If a convex set $K$ in an asymmetric normed space $\mathbb{R}^n$ with the unit ball $B$ is covered by (centered) multi-planks $P_1, \ldots, P_N$ of rank at most $k$, then
\[
\sum\limits_{i=1}^N r^B(P_i) \ge r^B_{(k)}(K).
\]
\end{theorem}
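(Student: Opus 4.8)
The plan is to follow the ``Step 2'' argument from the proof of Theorem~\ref{thm:pants} verbatim, since the reduction to centered multi-planks (Step 1) has already been abandoned by hypothesis (we only cover $K$ by centered multi-planks) and, as the text remarks, the rest of the argument never used central symmetry or the Euclidean structure of the norm in an essential way. So first I would argue by contradiction: suppose $\alpha = r^B_{(k)}(K) / \sum_i r^B(P_i) > 1$, and introduce the \emph{Bang set} $X = V_1 + \ldots + V_N = \{\sum_i v_i^{j_i} \mid 1 \le j_i \le m_i\}$, where $V_i = \{v_i^1,\ldots,v_i^{m_i}\}$ generates $P_i$.

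The argument then has the same two halves. \emph{First half (the set $X$ fits in a translate of $K$).} By Definition~\ref{def:normedpants}, each $V_i$ is contained in a homothet $r^B(P_i)\, B$, and since its convex hull is $k$-dimensional, $V_i$ lies inside a $k$-dimensional section of $r^B(P_i)\,B$ through the origin; by the definition of the lower intrinsic inradius $r^B_{(k)}(K)$, every such section of $r\,B$ with $r = r^B_{(k)}(K)$ can be translated into $K$, hence $V_i$ can be translated into $\frac{r^B(P_i)}{r^B_{(k)}(K)} \overline K \subset \frac{\alpha\, r^B(P_i)}{r^B_{(k)}(K)} K$. Summing the translates and using $\sum_i \frac{\alpha\, r^B(P_i)}{r^B_{(k)}(K)} = 1$ together with the convexity of $K$ gives $X \subset s + K$ for a suitable $s \in \mathbb{R}^n$. \emph{Second half (the set $X$ does not fit in $\bigcup_i P_i$).} Suppose $X \subset s + \bigcup_i P_i$. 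Among the finitely many points of $X - s$, pick one, say $x = -s + \sum_i v_i^{j_i}$, that is \emph{farthest from the origin with respect to the ``anti-norm''} $x \mapsto \|{-x}\|$ (i.e.\ maximize $\| -(X-s)\|$-type quantity; more precisely we want the point minimizing membership in the shifted anti-Vorono\u{\i} cells). For this $x$ and each fixed $i$, compare $x$ with the vectors $(x - v_i^{j_i}) + v_i^{j_i'}$ over $j_i' \in [m_i]$: these differ from $x$ only in the $i$-th summand, so they also lie in $X - s$, and by the extremality of $x$ we have $\|-x\| \ge \|-((x - v_i^{j_i}) + v_i^{j_i'})\|$, which is exactly the condition $x - v_i^{j_i} \in A_{-V_i}^{j_i}$, i.e.\ $x \in v_i^{j_i} + A_{-V_i}^{j_i}$, so $x \notin P_i$. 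Since this holds for every $i$, $x \notin \bigcup_i P_i$, contradicting $X \subset s + \bigcup_i P_i$. Combining the two halves contradicts the covering hypothesis and proves the theorem.

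The one point that needs genuine care, and which I expect to be the main (minor) obstacle, is choosing the right notion of ``farthest point'' in the second half: in the Euclidean case one just takes the point of largest norm, but here the anti-Vorono\u{\i} cells are defined by inequalities $\|x+v^j\| \ge \|x+v^{j'}\|$, so one must extremize the correct asymmetric functional. The clean way is to order the finite set $X-s$ by the value $\max_{v \in (V_1+\cdots+V_N)} \| -(X - s) + \text{(something)} \|$ — concretely, take $x \in X - s$ maximizing $\phi(x) := \|-x\|$; one checks that then for each $i$ and each $j_i'$, replacing $v_i^{j_i}$ by $v_i^{j_i'}$ yields another element $x' \in X-s$ with $\phi(x') \le \phi(x)$, i.e.\ $\|-(x - v_i^{j_i} + v_i^{j_i'})\| \le \|-x\| = \|-(x)\| = \|-( (x - v_i^{j_i}) + v_i^{j_i})\|$, which is precisely the defining inequality placing $x - v_i^{j_i}$ in $A_{-V_i}^{j_i}$. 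Everything else — the finiteness of $X$, the convexity manipulation in the first half, and the fact that Step~1 is simply not needed here — is routine, and the whole proof is, as the paper says, ``literally the same'' as Step~2 of Theorem~\ref{thm:pants}.
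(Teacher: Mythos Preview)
Your approach is exactly the paper's: the paper simply asserts that Step~2 of Theorem~\ref{thm:pants} carries over, and you have reproduced that step. One correction is needed in your second half, however. Unwinding Definition~\ref{def:normedpants}, the condition $x \in v^j + A_{-V}^j$ reads $\|(x-v^j)+v^j\| \ge \|(x-v^j)+v^{j'}\|$ for all $j'$, i.e.\ $\|x\| \ge \|x - v^j + v^{j'}\|$. So the right extremal choice is the point $x \in X - s$ maximizing $\|x\|$, not $\|{-x}\|$: then for each $i$, since every $x - v_i^{j_i} + v_i^{j_i'}$ lies in $X - s$, we get $\|x\| \ge \|x - v_i^{j_i} + v_i^{j_i'}\|$ for all $j_i'$, hence $x \in v_i^{j_i} + A_{-V_i}^{j_i}$ and $x \notin P_i$. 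Your inequality $\|{-x}\| \ge \|{-(x - v_i^{j_i} + v_i^{j_i'})}\|$ is \emph{not} the defining condition for $A_{-V_i}^{j_i}$ when the norm is genuinely asymmetric, so the ``anti-norm'' detour is both unnecessary and incorrect.

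A smaller remark on your first half: the assertion that $V_i$ lies in a $k$-dimensional section of $r^B(P_i)\,B$ \emph{through the origin} is exactly where the Euclidean argument used a Euclidean fact (the center of the smallest enclosing ball lies in $\conv V$, forcing $0 \in \aff V$). For asymmetric $B$ this can fail --- e.g.\ with $B$ the open triangle on vertices $(2,0),(-1,1),(-1,-1)$ and $V=\{(-1,1),(-1,-1)\}$, the closure $\overline B$ is the minimal enclosing homothet of $V$, yet $\aff V = \{x=-1\}$ misses the origin and $V$ does not fit in any translate of a one-dimensional central section of $\overline B$. The paper's terse ``Step~2 is still valid'' passes over this point; your attempt to spell it out surfaces the subtlety rather than resolving it.
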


%Here the intrinsic inradius $r^B_{(k)}(K)$ is defined similarly to the Euclidean case, as the largest number $r$ such that any $k$-dimensional section (passing through the origin) of $rB$ can be shifted inside $K$. %(Similarly, the upper inradius $r^{(k)}_B(K)$ is defined as the largest number $r$ such that $$)

In the case $k=1$, $K=B$, Theorem~\ref{def:normedpants} can be viewed as a result on the subadditivity of relative widths. Take a look at the ``bent'' plank $P$ in Figure~\ref{fig:normed}: it has the same length intersection with every line parallel to $v^1 - v^2$. In this sense, $P$ has a well-defined ``relative width'' $r^B(P)$ in this direction. In these terms, Theorem~\ref{def:normedpants} says that if $K$ covered by ``bent'' centered planks then the sum of their ``relative widths'' is at least 1. This might be reminiscent of Bang's conjectured inequality on the sum of relative widths: if an open bounded convex set $K$ containing the origin is covered by (conventional straight) planks $P_1, \ldots, P_N$, then
\[
\sum\limits_{i=1}^N r^{(1)}_K(P_i) \ge 1.
\]

We finish with a strong conjecture subsuming Bang's conjecture as well as many other subadditivity statements.

\begin{conjecture}
Let $B$ be an open bounded convex set containing the origin, and let a convex set $K$ be covered by convex sets $C_1, \ldots, C_N$. Then for any $1 \le k \le n$,
\[
\sum\limits_{i=1}^N r^{(k)}_B(C_i) \ge r_{(k)}^B(K).
\]
\end{conjecture}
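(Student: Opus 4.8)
The conjecture contains as special cases Theorem~\ref{thm:subadditivity} (take $B$ a Euclidean ball), Bang's conjecture on relative widths (essentially the case $k=1$), and the normed counterpart of Kadets's theorem (the case $k=n$); a complete proof would therefore settle several open problems at once, so what follows is a program rather than a proof. The Euclidean argument nevertheless suggests a clear strategy and isolates where the difficulty lies.

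The plan is to reproduce the two steps behind Theorem~\ref{thm:subadditivity}. \emph{First}, one wants a normed analogue of Lemma~\ref{lem:simple}: every convex $C_i$ with finite $r^{(k)}_B(C_i)$ should sit inside a translated simple normed multi-plank of rank at most $k$ and inradius $r^{(k)}_B(C_i)$. The natural construction is to pick a codimension-$k$ subspace $N$ realizing the infimum in $r^{(k)}_B(C_i)$, take a largest homothet $c+\rho B$ with $\rho=r^{(k)}_B(C_i)$ inside the ($N$-invariant) convex set $C_i+N$, and let $V_i$ be a finite set of contact points of $\partial(c+\rho B)$ with $\partial(C_i+N)$ that witness maximality; one would then verify, as the normed analogue of Lemma~\ref{lem:simple}, that $V_i-c$ generates a legitimate centered multi-plank of inradius $\rho$ and that $C_i\subseteq c+P_i$. \emph{Second}, one feeds the resulting covering of $K$ into a normed multi-plank theorem. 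The Bang-set argument of Step~2 of the proof of Theorem~\ref{thm:pants} already handles \emph{centered} multi-planks in an arbitrary norm --- this is Theorem~\ref{thm:normedpants} --- so what is missing is precisely the passage from centered to translated multi-planks, i.e., a normed substitute for Step~1.

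That passage is where I expect the real trouble, and it is the main obstacle. In the Euclidean proof Step~1 is Bogn\'ar's coning trick, which replaces a translated multi-plank by a centered one in one higher dimension; as remarked after Definition~\ref{def:normedpants}, this genuinely fails for a general $B$, since a non-Euclidean ball does not project to a ball, so a translated multi-plank is not an affine slice of a centered higher-dimensional one. One is thus thrown back on the original \emph{adaptive} selection of Bang and Kadets: build the escaping point $x=\sum_i v_i^{j_i}$ of the Bang set one summand at a time, choosing each index $j_i$ so that the running partial sum is pushed ``outward'' relative to $P_i$ in a way stable under adding the remaining summands. The obstruction is that when $B$ is not Euclidean the cells $v^j+A_{-V}^j$ are non-convex, so the linear-functional bookkeeping that makes the adaptive choice work in the Euclidean case --- and that also supplies the single global extremum used in the centered case --- has no evident analogue; this is exactly the phenomenon that keeps Bang's relative-width conjecture open for asymmetric $B$. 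A more modest target would be to assume $B$ centrally symmetric and try to marry the stratification of Section~\ref{sec:voronoi} with K.~Ball's method~\cite{ball1991plank}, or to settle first the partition version in the spirit of~\cite{akopyan2012kadets}. One should also note that even the covering lemma of the first step is delicate here: $r^{(k)}_B$ is built from projections while $r^B_{(k)}$ is built from sections, and for a general $B$ the projected ball $B|N^\perp$ strictly contains the section $B\cap N^\perp$, so the Euclidean coincidence ``contact normal $=$ contact direction'' --- which is what makes the generating set of Lemma~\ref{lem:simple} land in the required $k$-dimensional subspace and makes the Minkowski-sum step (Step~2.1 of the proof of Theorem~\ref{thm:pants}) go through --- has no substitute, and reconciling projections with sections may itself impose extra hypotheses on $B$.
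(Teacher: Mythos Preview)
The paper does not prove this statement; it is explicitly labeled a \emph{Conjecture} and left open, with the remark that it subsumes Bang's relative-width conjecture and the normed Kadets problem. Your write-up correctly recognizes this and presents a program rather than a proof, so in that sense you are aligned with the paper: neither you nor the author claims a proof.

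Your diagnosis of the obstruction is also consistent with, and in fact more detailed than, the paper's. The paper's only analysis of why the Euclidean method breaks is the sentence before Theorem~\ref{thm:normedpants}: Step~1 (Bogn\'ar's coning trick) fails because a shifted normed multi-plank is not a section of a higher-dimensional centered one, whence Theorem~\ref{thm:normedpants} is restricted to centered multi-planks. You reproduce that point and add two further observations the paper does not make explicit: that the anti-Vorono\u{\i} cells are non-convex in a general norm (so the adaptive Bang selection has no obvious linear-functional surrogate), and that the projection/section mismatch between $r^{(k)}_B$ and $r^B_{(k)}$ already obstructs a clean normed analogue of Lemma~\ref{lem:simple}. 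These are reasonable elaborations, not errors.

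In short: there is no proof in the paper to compare against; your proposal is an honest outline of where the Euclidean argument would need to be repaired, and it matches the paper's own brief remarks on the difficulty.
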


\bibliography{kadets}
\bibliographystyle{abbrv}
\end{document}